\newtheorem{lemma}{Lemma}
\newtheorem{theorem}{Theorem}
\newcommand{\F}{\mathcal{F}}
\newcommand{\N}{\mathcal{N}}
\newcommand{\abs}[1]{\left\lvert{#1}\right\rvert}
\DeclareMathOperator{\ex}{ex}
\title{The Maximum Number of Pentagons in a Planar Graph}
 \author[1]{Ervin Gy\H{o}ri} 
 \author[2,3]{Addisu Paulos}
 \author[1,4]{Nika Salia}
 \author[1,5,6]{Casey Tompkins}
 \author[2,7]{Oscar Zamora} 
\affil[1]{Alfr\'ed R\'enyi Institute of Mathematics, Hungarian Academy of Sciences.}
 \affil[2]{Central European University, Budapest.}
 \affil[3]{Addis Ababa University, Addis Ababa}
 \affil[4]{Extremal Combinatorics and Probability Group, IBS, Daejeon, Republic of Korea.}
 \affil[5]{Karlsruhe Institute of Technology, Germany.}
 \affil[6]{Discrete Mathematics Group, IBS, Daejeon, Republic of Korea.}
\affil[7]{Universidad de Costa Rica, San Jos\'e.}
\begin{document}
\maketitle

\begin{abstract}
In 1979, Hakimi and Schmeichel considered the problem of maximizing the number of cycles of a given length in an $n$-vertex planar graph.  They precisely determined the maximum number of triangles and $4$-cycles and presented a conjecture for the maximum number of pentagons. In this work, we confirm their conjecture. Even more, we characterize the $n$-vertex, planar graphs with the maximum number of pentagons.
\end{abstract}

\section{Introduction}
It is well-known that an $n$-vertex planar graph can have at most $3n-6$ edges.   Given this fact, it is natural to ask about the maximum possible number of other substructures. In 1979, Hakimi and Schmeichel~\cite{hakimi} initiated such a study by determining the maximum number of triangles and $4$-cycles in an $n$-vertex planar graph (see also~\cite{ahmad}). For a given graph $H$, let $f(n,H)$ denote the maximum number of (not necessarily induced) copies of the graph $H$ in an $n$-vertex planar graph.

Alon and Caro~\cite{alon} determined the value of the function $f(n,H)$ in the case when $H$ is a complete bipartite graph with the smaller part of size at most two. In the same paper they stated a conjecture due to Perles that for any 3-connected planar graph $H$, we have $f(n,H) \le c_H n$ for some constant $c_H$ depending on $H$.  
This conjecture was subsequently verified by Wormald in~\cite{Wormald} and independently by Eppstein in~\cite{eppstein}. The exact value of $f(n,K_4)$ was determined by Wood in~\cite{k4}.


Hakimi and Schmeichel~\cite{hakimi} proved that $f(n,C_5)\leq 5n^2-26n$.  Furthermore, they conjectured that a bound of $2n^2-10n+12$ should hold, which is attained by the graph $D_n$ obtained from a cycle on $n-2$ vertices by adding two vertices that are adjacent to each vertex of the cycle  (see Figure~\ref{con}). 
We confirm that their conjecture holds (for $n\ge 8$), and provide a complete characterization of the extremal graphs for all $n$. In the case when $n$ is equal to $8$ or $11$ there are two further extremal graphs $A_8$ and $A_{11}$, pictured in Figure~\ref{con}.
\begin{theorem}
\label{c5}
 For an integer $n$ such that $n=6$ or $n \ge 8$, 
 \[
 f(n,C_5)=2n^2-10n+12.
 \]
We have $f(5,C_5)=6$ and $f(7,C_5)=41$. Furthermore, for $n\notin \{8,11\}$ the unique $n$-vertex planar graph with $f(n,C_5)$ copies of $C_5$ is $D_n$. 
For $n\in \{8,11\}$ the graphs $D_n$ and $A_n$ are the only $n$-vertex planar graphs with $f(n,C_5)$ copies of $C_5$.

\end{theorem}

\begin{figure}   
\begin{subfigure}      {0.23\linewidth}
\centering
\begin{tikzpicture}[scale=0.8] 
\foreach \x in{0,1,2,3,4,5}{\draw[fill=black](0,\x)circle(3pt);};
\draw[fill=black](-2,2.5)circle(3pt);
\draw[fill=black](2,2.5)circle(3pt);
\foreach \x in{0,1,2,4}{\draw[thick](0,\x)--(0,\x+1);}
\draw (0,3.625)node{$\vdots$};
\foreach \x in{0,1,2,3,4,5}{\draw[thick](-2,2.5)--(0,\x)--(2,2.5);}
\draw (0,5) arc (90:270:2.5cm and 2.5cm);
\end{tikzpicture}
\caption{$D_n$}
\end{subfigure}
\begin{subfigure}      {0.23\linewidth}
\centering
\begin{tikzpicture}[scale=.54] 
\filldraw (0,0) circle (5pt)  -- (30:1cm)  circle (5pt) -- (150:1cm) circle (5pt)  -- (270:1cm) circle (5pt)  (0,0) -- (150:1cm) (270:1cm)--(30:1cm) (270:1cm) -- (0,0);
\filldraw   (270:1cm) -- (210:2cm) circle (5pt)  --  (150:1cm);
\filldraw   (30:1cm) -- (330:2cm) circle (5pt)  --  (270:1cm);
\filldraw   (90:5cm) circle (5pt);
\draw[shift= {(30:1cm)},rotate=-80] (0,0) arc (0:180: 2.3cm and 1cm);
\draw[shift= {(150:1cm)},rotate=-100] (0,0) arc (0:-180: 2.3cm and 1cm);
\draw (270:1cm) arc(240:480: 2.2cm and 3.5cm);
\draw[shift= {(330:2cm)},rotate=-74] (0,0) arc (0:180: 3.13cm and 1.2cm);
\draw[shift= {(210:2cm)},rotate=-106] (0,0) arc (0:-180: 3.13cm and 1.2cm);
\filldraw   (30:1cm) -- (90:3cm) circle (5pt)  --  (150:1cm);
\filldraw   (90:5cm)  --(90:3cm);
\end{tikzpicture}
\caption{$A_8$}
\end{subfigure}
\begin{subfigure}      {0.23\linewidth}
\centering
\begin{tikzpicture}[scale=.54]
\filldraw (0,0) circle (5pt)  -- (30:1cm)  circle (5pt) -- (150:1cm) circle (5pt)  -- (270:1cm) circle (5pt)  (0,0) -- (150:1cm) (270:1cm)--(30:1cm) (270:1cm) -- (0,0);
\filldraw   (270:1cm) -- (210:2cm) circle (5pt)  --  (150:1cm);
\filldraw   (30:1cm) -- (330:2cm) circle (5pt)  --  (270:1cm);
\filldraw   (90:5cm) circle (5pt);
\draw[shift= {(30:1cm)},rotate=-80] (0,0) arc (0:180: 2.3cm and 1cm);
\draw[shift= {(150:1cm)},rotate=-100] (0,0) arc (0:-180: 2.3cm and 1cm);
\draw (270:1cm) arc(240:480: 2.2cm and 3.5cm);
\draw[shift= {(330:2cm)},rotate=-74] (0,0) arc (0:180: 3.13cm and 1.2cm);
\draw[shift= {(210:2cm)},rotate=-106] (0,0) arc (0:-180: 3.13cm and 1.2cm);

\filldraw   (90:5cm)  --(90:3cm);

\filldraw (90:3cm) -- (70:3cm) circle (5pt)  -- (90:5cm)  (70:3cm) -- (30:1cm);
\filldraw (90:3cm) -- (110:3cm) circle (5pt) -- (90:5cm) (110:3cm) -- (150:1cm);

\filldraw   (30:1cm) -- (90:1.5cm) circle (5pt)  --  (150:1cm);
\filldraw   (30:1cm) -- (90:3cm) circle (5pt)  --  (150:1cm) (90:1.5cm)--(90:3cm);

\end{tikzpicture}
\caption{$A_{11}$}
\end{subfigure}
\begin{subfigure}      {0.23\linewidth}
\begin{tikzpicture}[scale=0.8]
\foreach \x in{1,2,3,4,5}{\draw[fill=black](0,\x)circle(3pt);};
\draw[fill=black](-2,0)circle(3pt);
\draw[fill=black](2,0)circle(3pt);
\foreach \x in{1,2,4}{\draw[thick](0,\x)--(0,\x+1);}
\draw (0,3.625)node{$\vdots$};
\foreach \x in{1,2,3,4,5}{\draw[thick](-2,0)--(0,\x)--(2,0);}
\draw[black,thick](-2,0)--(2,0);
\end{tikzpicture}
\caption{$E_n$}
\end{subfigure}
\caption{The graphs $D_n$, $A_8$, $A_{11}$ and $E_n$.}
\label{con}
\end{figure}

 Following the initial appearance of this work several further results have been obtained by both the present group and other groups of authors. These include determining the order of magnitude of $f(n,H)$ when $H$ is a tree~\cite{genplanar} or arbitrary graph~\cite{surface}. 
 Moreover, an exact result was attained for the path $P_4$~\cite{other} as well as several asymptotic results.
 Recently Cox and Martin~\cite{coxmartin,coxmartin2} introduced some general reduction lemmas which allowed them to determine $f(n,C_{2k})$ asymptotically for $k\in\{3,4,5,6\}$.  
 Using these lemmas the asymptotic value of $f(n,C_{2k})$ was determined for all $k$~\cite{evencyc}, and good estimates for $f(n,P_{2k+1})$~\cite{shapira} have also been obtained.
 
The problem of maximizing the number of copies of a graph $H$ in a planar graph is related to some other problems which have been considered recently.  Alon and Shikhelman~\cite{AlonShikhelman}  introduced a generalized extremal function $\ex(n,H,\F)$, defined to be the maximum number of copies of $H$ possible in an $n$-vertex graph, containing no subgraph $F \in \F$.
When $\F$ contains only one graph~$F$, we write simply $\ex(n,H,F)$.  Zykov~\cite{zykov} (and independently Erd\H{o}s~\cite{erdos}) completely resolved the case when $H$ and $F$ are both cliques. 
The problem of determining $\ex(n,C_5,C_3)$ was a well-known conjecture of Erd\H{o}s and was finally settled by Hatami, Hladk\'y, Kr\'al, Norine and Razborov~\cite{HHKNR2013} and independently by Grzesik~\cite{G2012}.

By Kuratowski's~\cite{kur} theorem, the problem of determining $f(n,H)$ is equivalent to determining $\ex(n,H,\F)$, where $\F$ is either the family of $K_{3,3}$ and $K_5$ subdivisions. 
Similar problems have also been considered when $K_t$ is forbidden as a minor or subdivision.  The problem of maximizing the total number of cliques in the such graph was investigated in a series of papers culminating in~\cite{k5} and~\cite{fox}. Maximizing the number of cliques of a fixed size in a graph without a $K_t$-minor was considered by Wood in~\cite{kcliques} and the problem of maximizing trees in some general graph classes was investigated by Huynh and Wood~\cite{HW}.

Now we introduce some notation which we will require later.
We denote the degree of a vertex~$x$ in a graph $G$ by $d_G(x)$. 
When the graph being considered is obvious, we omit the subscript $G$. 
A path of length~$k$ traversing the vertices $x_1,x_2,\dots,x_{k+1}$ in that order is denoted $x_1x_2\dots x_{k+1}$.
The vertices $x_1$ and $x_{k+1}$ are referred to as terminal vertices, and the remaining vertices are referred to as internal vertices.
Similarly, a cycle of length~$k$ going through the vertices $x_1,x_2,\dots,x_k,x_1$ is denoted by $x_1x_2\dots x_kx_1$.  

A copy of a graph $H$ in a graph $G$ is a  subgraph of $G$ (not necessarily induced), isomorphic to~$H$.  
For graphs $G$ and $H$, we denote by $\N(H,G)$ the number of copies of $H$ in $G$. The neighborhood of a vertex $v$ is denoted by $N(v)$, and the closed neighborhood (that is, $\{v\}\cup N(v)$) is denoted by~$N[v]$. 
Given a graph $G$, the vertex and edge sets of $G$ are denoted by $V(G)$ and $E(G)$, respectively.  
Let~$G$ be a graph and $S \subset V(G)$, then $G[S]$ denotes the induced subgraph of $G$ on the vertex set $S$.  
Let~$C=x_1x_2\dots x_kx_1$ be a cycle in $G$, then $C$ is said to separate the vertices $y,z \in V(G)$ in a planar embedding of $G$ if one of $y$ or $z$ is in the interior of the closed curve formed by embedding of the cycle and the other one is in the exterior. 
For a given graph $G$, if $e = \{v,u\}$ is an edge of $G$, then the contraction of the edge $e$ is the graph obtained from $G$ by replacing the two vertices $\{v,u\}$ with a new vertex $w$ and replacing the edges of the form~$\{v,x\}$ and~$\{y,u\}$ with the edges~$\{w,x\}$ and~$\{y,w\}$ respectively, taking the new edges without multiplicity. 

\section{Proof of the main result}
\subsection{Outline of the proof}
In Subsection~\ref{EL} we prove some basic lemmas necessary for the proof of Theorem~\ref{c5}. We will prove Theorem~\ref{c5} by induction on the number of vertices. 
The proof is split into three parts.
The first part settles the case when the graph contains a vertex $v$ of degree four. 
Next, we settle the case when the graph contains a vertex $v$ of degree three and no vertex of degree four. 
Finally, we settle the case when the minimum degree is five. 


We remove a vertex $v$ from the graph and add some additional edges to make it maximal. 
We then argue that the number of cycles of length-$5$ was not decreased by much.

\subsection{Results and Properties of Planar Graphs}\label{EL}








An $n$-vertex planar graph $G$ has $3n-6$ edges if and only if in any planar embedding every face of $G$ is a triangle, and so an $n$-vertex planar graph with $3n-6$ edges is called a triangulation. 
A planar graph $G$ is called maximal if it is not possible to add an extra edge to $G$ and preserve the planarity. 
Moreover, if an $n$-vertex graph has less than $3n-6$ edges, then it is always possible to add another edge while keeping the graph planar. Hence the maximal planar graphs are precisely the triangulations.



Recall that by Kuratowski's Theorem~\cite{kur} a graph $G$ is planar if and only if $G$ does not contain a $K_5$ or $K_{3,3}$ subdivision. 


Recall that for a given graph $G$, if $e = \{v,u\}$ is an edge of $G$, then the \textit{contraction} of the edge $e$ in $G$ is the graph obtained from $G$ by replacing the two vertices $\{v,u\}$ with a new vertex $w$ and replacing the edges of the form $\{v,x\}$ and $\{y,u\}$ with the edges $\{w,x\}$ and $\{y,w\}$ taking the new edges without multiplicity. It is known that contracting an edge from a planar graph would result in a planar graph.



We will give three lemmas which will be used later in the proof of Theorem~\ref{c5}.

\begin{lemma}\label{pforest}
Let $G$ be a planar graph and $\{u,v\}$ be an edge, then $G[N(u)\cap N(v)]$ is a path forest. Moreover, the graph induced by $\{u,v\} \cup \left(N(u) \cap N(v)\right)$ is a triangulation if and only if $G[N(u) \cap N(v)]$ is a path.
\end{lemma}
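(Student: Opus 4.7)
The plan is to extract the structure of $G[N(u)\cap N(v)]$ from Euler's formula applied to a small induced subgraph, and then enforce the degree restriction via a forbidden-subgraph argument. Write $S = N(u)\cap N(v)$, $k = |S|$, and let $H = G[\{u,v\}\cup S]$. Since $u$ and $v$ are adjacent and each is joined to every vertex of $S$, $H$ has at least $2k+1+|E(G[S])|$ edges. Being an induced subgraph of a planar graph, $H$ is planar, so when $k\ge 1$ the bound $|E(H)|\le 3(k+2)-6=3k$ gives
\[
|E(G[S])| \le k-1.
\]
Hence $G[S]$ is a forest. (The cases $k=0,1$ are trivial.)

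Next I would show that $G[S]$ has maximum degree at most $2$. Suppose some $w\in S$ had three neighbors $w_1,w_2,w_3$ inside $S$. Then within the vertex set $\{u,v,w,w_1,w_2,w_3\}$ all nine bipartite edges between $\{u,v,w\}$ and $\{w_1,w_2,w_3\}$ are present: the edges $uw_i$ and $vw_i$ come from $w_i\in N(u)\cap N(v)$, and the edges $ww_i$ come from the assumption. This exhibits $K_{3,3}$ as a subgraph of $G$, contradicting planarity. Thus every vertex of $G[S]$ has degree at most $2$, and together with acyclicity this makes $G[S]$ a disjoint union of paths, i.e.\ a path forest.

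For the moreover part, I would use the same edge count in reverse. A planar graph on $k+2$ vertices is a triangulation (every face a triangle) if and only if it has exactly $3(k+2)-6=3k$ edges, by Euler's formula. From the inequality above, $H$ achieves $3k$ edges iff $|E(G[S])|=k-1$, i.e.\ iff the forest $G[S]$ is a tree. Combined with the already established bound $\Delta(G[S])\le 2$, being a tree is equivalent to being a single path. So $H$ is a triangulation iff $G[S]$ is a path.

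The proof is short, and I do not expect a real obstacle: the only care needed is to make sure the $K_{3,3}$ extracted in the degree step really uses only edges guaranteed by the hypothesis (no accidental reliance on $ww$-type edges), and to handle the small-$k$ boundary cases ($k=0,1,2$) where the inequality $|E|\le 3n-6$ is either vacuous or tight.
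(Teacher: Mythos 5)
Your argument for the degree bound (the $K_{3,3}$ between $\{u,v,w\}$ and $\{w_1,w_2,w_3\}$) and your treatment of the ``moreover'' part via the edge count $|E(H)|=2k+1+|E(G[S])|$ are both correct and essentially identical to what the paper does. The problem is the acyclicity step. From $|E(H)|\le 3k$ you deduce $|E(G[S])|\le k-1$ and then assert ``hence $G[S]$ is a forest.'' That inference is false: a graph on $k$ vertices with at most $k-1$ edges can perfectly well contain a cycle (a triangle together with $k-3$ isolated vertices has $3\le k-1$ edges for $k\ge 4$). Even after you add the maximum-degree-$2$ conclusion, $G[S]$ could a priori be a disjoint union of one cycle and some paths, e.g.\ a triangle plus a single edge on $k=5$ vertices has $4=k-1$ edges and maximum degree $2$. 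So as written, the proof never rules out cycle components, and the ``together with acyclicity'' in your path-forest conclusion rests on a step that does not hold.

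The gap is easy to close while staying within your Euler-formula framework: if $C$ is a cycle in $G[S]$ on $c$ vertices, apply the same count to the subgraph induced by $\{u,v\}\cup V(C)$ alone; it has $c+2$ vertices and at least $2c+1+c=3c+1>3(c+2)-6$ edges, contradicting planarity. (The paper instead contracts the cycle to a triangle and exhibits a $K_5$ minor; either works.) With acyclicity repaired this way, the rest of your proof goes through and matches the paper's argument.
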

\begin{proof}
First, we show that $G'=G[N(u)\cap N(v)]$ is acyclic.
Suppose by contradiction, there is a cycle $C$ in the common neighborhood  $N(u)\cap N(v)$, then 
 we have a $K_5$-minor. 
 Indeed, if we contract this cycle to a triangle, then this triangle together with $v$ and $u$ forms a $K_5$, contradicting  planarity. 
 Hence $G'$ is acyclic. 
 
 Now we show that $d_{G'}(w) \leq 2$ for all $w \in V(G')$.  
 Suppose $d_{G'}(w)\geq 3$ for some $w \in V(G')$, then taking three distinct vertices $w_1,w_2,w_3\in N(u)\cap N(v) \cap N(w)$ yields a $K_{3,3}$,  a contradiction to planarity. 
 Therefore, $d_{G'}(w) \leq 2$ for all $w \in V(G')$, hence $G'$ is a path forest. 
 
The graph induced by the vertex set $\{v,u\} \cup \left(N(u) \cap N(v)\right)$ is a triangulation if and only if it has $3\abs{N(u)\cap N(v)}$ edges (here we use the basic fact that the planar graphs with the maximum number of edges are triangulations).
There are $2\abs{N(u)\cap N(v)}+1$ edges incident with $u$ or $v$. 
Thus, the graph induced by $\{v,u\} \cup \left(N(u) \cap N(v)\right)$ is a triangulation if and only if  the graph induced by $N(u)\cap N(v)$  has precisely $\abs{N(u) \cap N(v)} -1$ edges. 
Therefore, it is a triangulation if and only if $G[N(u)\cap N(v)]$ is a path.
\end{proof}

\begin{lemma}\label{2(k-1)}
Let $G$ be a planar graph on $k \ge 3$ vertices and let $\{u,v\}$ be an edge of $G$, then the number of paths of length~$3$ from $u$ to $v$ is at most $2(k-3)$.
\end{lemma}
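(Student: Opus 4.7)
The plan is to pass to the subgraph $H = G[\{u,v\}\cup A\cup B]$, where $A = N(u)\setminus\{v\}$ and $B = N(v)\setminus\{u\}$. Set $I = A\cap B$ and $n = |A\cup B|$. Every length-3 path from $u$ to $v$ lies entirely inside $V(H)$, so the number $P$ of such paths is the same in $G$ and in $H$, and $H$ is planar on $n+2 \le k$ vertices. It therefore suffices to prove $P \le 2n - 2$, since combined with $n+2 \le k$ this gives $P \le 2(k-3)$.

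I would then classify the edges of $G[A\cup B]$ by the number of length-3 $uv$-paths they support: an edge inside $I$ contributes $2$ (both orderings of the two middle vertices); an edge with exactly one endpoint in $I$ and the other in $(A\cup B)\setminus I$, as well as an edge between $A\setminus B$ and $B\setminus A$, contributes $1$; and an edge inside $A\setminus B$ or inside $B\setminus A$ contributes $0$. This yields the clean bound $P \le |E(G[A\cup B])| + |E(G[I])|$. Since $H$ is planar on $n+2$ vertices and $|E(H)| = 1 + |A| + |B| + |E(G[A\cup B])|$ with $|A|+|B| = n+|I|$, Euler's formula gives $|E(G[A\cup B])| \le 2n-1-|I|$; by Lemma~\ref{pforest}, $G[I]$ is a path forest, so $|E(G[I])| \le \max(|I|-1,0)$. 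When $|I|\ge 1$ these combine to $P \le (2n-1-|I|)+(|I|-1) = 2n-2$, as needed.

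The main subtlety I anticipate is the case $|I|=0$, where the bookkeeping above is off by one. Here I would work instead with the spanning subgraph $H' \subseteq H$ obtained by deleting every edge of $G[A]$ and every edge of $G[B]$. As $A$ and $B$ are now disjoint, the sets $\{u\}\cup B$ and $\{v\}\cup A$ partition $V(H')$ and every remaining edge crosses this partition, so $H'$ is a bipartite planar graph on $n+2$ vertices. The bipartite planar bound gives $|E(H')| \le 2n$, and a direct count shows $|E(H')| = 1 + |A| + |B| + P = 1 + n + P$, yielding $P \le n-1 \le 2n-2$. The trivial small cases $n \le 1$ are immediate, so combining both cases finishes the proof.
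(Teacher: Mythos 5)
Your proof is correct, and it takes a genuinely different route from the paper's. The paper proves the lemma by induction on $k$: it handles the case $N(u)\cap N(v)=V(G)\setminus\{u,v\}$ via Lemma~\ref{pforest}, and otherwise either deletes a vertex lying on at most two of the paths or contracts a carefully chosen edge $\{x,y_2\}$ (using a separating cycle to guarantee $y_2\notin N(u)$) so that the path count drops by at most one. You instead give a direct, non-inductive count: every length-$3$ $uv$-path lives in $H=G[\{u,v\}\cup A\cup B]$, each edge of $G[A\cup B]$ carries at most one path except edges inside $I=N(u)\cap N(v)$, which carry two, and then Euler's formula on $H$ bounds $|E(G[A\cup B])|$ while Lemma~\ref{pforest} bounds $|E(G[I])|$; the off-by-one case $I=\emptyset$ is rescued by the bipartite planar bound $|E|\le 2|V|-4$. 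All the arithmetic checks out ($|E(H)|=1+|A|+|B|+|E(G[A\cup B])|$ with $|A|+|B|=n+|I|$, so $|E(G[A\cup B])|\le 2n-1-|I|$, and $P\le(2n-1-|I|)+(|I|-1)=2n-2\le 2(k-3)$ since $n+2\le k$; in the disjoint case $P\le n-1$). Your argument is arguably cleaner and even yields the slightly stronger bound $2(|N(u)\cup N(v)\setminus\{u,v\}|-1)$, at the cost of a case split on $|I|$; the paper's contraction machinery, by contrast, is set up so that essentially the same deletion/contraction template can be reused in the proof of Lemma~\ref{triangle}, which is why the authors chose the inductive formulation.
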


\begin{proof}
We may assume that $N(u)\cap N(v) \neq V(G)\setminus\{u,v\}$,  since otherwise Lemma~\ref{pforest} would imply the result. 
Indeed since $G[N(u)\cap N(v)]$ is a path forest, it has at most $k-3$ edges and so there are at most $2(k-3)$ paths of length~$3$ between $u$ and $v$.

We are going to prove the lemma by induction on $k$, the bound trivially holds for $k=3$. Consider the set of ordered pairs $A = \{(x,y):uxyv$ is a path$\}$. 
Let $x$ be a vertex that is not in $N(u)\cap N(v)$, and without loss of generality suppose $x\not \in N(v)$. If $x$ is in at most two pairs from $A$, then we can remove $x$ and we would be done by induction, so suppose $x$ is in at least three pairs from $A$. 
It follows that there exist three vertices distinct from $u$, say $y_1, y_2$ and $y_3$, in $N(x)\cap N(v)$. 
Suppose further that $y_2$ is such that the cycle $xy_1vy_3x$ separates $y_2$ and $u$ (see Figure \ref{le}).  Then $y_2$ is not adjacent to $u$. Now contract the edge $\{x,y_2\}$ to a vertex $x'$, and note that 
 with the exception of  $uxy_2v$, every path of length~$3$ from $u$ to $v$ using either $x$ or $y_2$ yields a unique path of length~$3$ from $u$ to $v$ containing $x'$ after the contraction.
So, by induction, in the contracted graph we have at most $2(k-4)$ paths of length~$3$ from $u$ to $v$, therefore in the original graph we have at most  $2(k-4) + 1 < 2(k-3)$ such paths.
\end{proof}
\begin{figure}   
    \centering
\begin{tikzpicture}[scale=1.2]
\filldraw (0,0) node[below]{$u$} circle (2pt) (2,0) node[below]{$v$} circle (2pt)  (.5,1)node[above]{$x$}   circle (2pt) (1,.5) node[left]{$y_1$}  circle (2pt)  (1.5,1) node[right]{$y_2$}  circle (2pt) (2,1.5) node[right]{$y_3$}  circle (2pt);
\draw (0,0) -- (.5,1) -- (1,.5) -- (2,0) (.5,1) -- (1.5,1) -- (2,0) (.5,1) -- (2,1.5) -- (2,0) -- (0,0) ; 
\end{tikzpicture}
    \caption{Separating cycle from proof of Lemma~\ref{2(k-1)}.}
    \label{le}
\end{figure}

\begin{lemma} 
\label{triangle} Let $k \ge 4$, $G$ be a $k$-vertex planar graph and let $T$ be a set of three vertices of any triangular face of $G$, then the number of paths of length~$3$ with both terminal vertices in $T$ is at most $4(k-1)$. If there is no vertex adjacent to all vertices in $T$, then there are at most $4k-9$ such paths. 
\end{lemma}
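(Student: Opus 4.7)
The main idea is to apply Lemma~\ref{2(k-1)} to each of the three edges of the triangular face $T = \{a,b,c\}$. Writing $p(s,t)$ for the number of length-three $s$-$t$ paths in $G$ and setting $N_3 = p(a,b) + p(a,c) + p(b,c)$, Lemma~\ref{2(k-1)} immediately gives $N_3 \le 6(k-3)$, which is already at most $4(k-1)$ when $k \le 7$. So the work lies in $k \ge 8$, where the gap $6(k-3) - 4(k-1) = 2k - 14$ has to come from incompatibilities between the three applications of Lemma~\ref{2(k-1)}.

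Suppose $p(s,t) = 2(k-3)$ for some edge $\{s,t\} \subset T$. The equality analysis of Lemma~\ref{2(k-1)} (via Lemma~\ref{pforest}) forces $N(s) \cap N(t) = V(G) \setminus \{s,t\}$ with $G[N(s) \cap N(t)]$ a single Hamilton path on $k-2$ vertices. Thus $s$ and $t$ are adjacent to every other vertex, and the third triangle vertex $u$ lies on this path with at most two neighbors along it, so $d_G(u) \le 4$. I would then enumerate $p(s,u)$ by splitting on the predecessor $z \in N(u) \setminus \{s\}$ of $u$ in the length-three path: the case $z=t$ contributes $k-3$ paths $s\,y\,t\,u$, while each of the at most two remaining choices for $z$ contributes at most two paths via the path-forest structure of Lemma~\ref{pforest}. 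This gives $p(s,u),\,p(t,u) \le k+1$, and hence
\[
N_3 \le 2(k-3) + 2(k+1) = 4(k-1).
\]

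If no edge of $T$ attains equality in Lemma~\ref{2(k-1)}, each $p(\cdot,\cdot) \le 2k-7$, giving $N_3 \le 6k-21 \le 4(k-1)$ for $k \le 8$. For $k \ge 9$ I would proceed by induction on $k$: delete a vertex $v \in V(G) \setminus T$ whose presence contributes at most four length-three paths with endpoints in $T$ (a vertex with $|N(v) \cap T| = 0$ is an immediate candidate, losing no such paths; otherwise one combines Lemma~\ref{pforest} with planarity to locate a suitable $v$), and apply the induction hypothesis to $G - v$.

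For the stronger bound $N_3 \le 4k-9$ when no vertex of $G$ is adjacent to all of $T$, the set $V_3 := \{v \notin T : |N(v) \cap T| = 3\}$ is empty. In the extremal configuration above this forces $u$ to have no neighbor on the Hamilton path (such a neighbor would be adjacent to all of $T$), so $d(u) = 2$, each of $p(s,u)$ and $p(t,u)$ drops to $k-3$, and $N_3 \le 4(k-3) \le 4k-9$. In the remaining configurations the five-unit save comes from the loss of the $6|V_3|$ term in the count $N_B = 2|V_2| + 6|V_3|$ of length-three paths with the third $T$-vertex as an internal vertex. The main obstacle I expect is Step~3 for $k \ge 9$: locating a vertex to delete that loses few enough paths and, when targeting the $4k-9$ bound, preserves the ``no universal vertex'' hypothesis under deletion.
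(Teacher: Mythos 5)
There is a genuine gap, in two places. First, your equality analysis of Lemma~\ref{2(k-1)} is incorrect: $p(s,t)=2(k-3)$ does \emph{not} force $N(s)\cap N(t)=V(G)\setminus\{s,t\}$. For example, take $s,t$ adjacent with $N(s)\cap N(t)=\{a,b,c\}$ inducing the path $a\text{-}b\text{-}c$ (this already gives $4=2(5-3)$ paths on $5$ vertices), and add a vertex $x$ inside the triangle $satb\ldots$, adjacent to $s,a,b$ only. Then $x\notin N(t)$, yet $x$ contributes the two further paths $sxat$ and $sxbt$, so $p(s,t)=6=2(6-3)$ while $N(s)\cap N(t)\neq V\setminus\{s,t\}$. (This is exactly the "remove $x$ and induct" branch of the proof of Lemma~\ref{2(k-1)}, which is tight.) Without the Hamilton-path structure you cannot conclude $d(u)\le 4$ or $p(s,u),p(t,u)\le k+1$, so Step~2 collapses. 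Second, and more importantly, your Step~3 carries essentially all of the weight for $k\ge 9$ (the "no equality" hypothesis only saves $3$ units against a deficit of $2k-14$), and the existence of a deletable vertex lying on at most four of the relevant paths is precisely the hard part: in the extremal configurations every vertex outside $T$ can lie on more than four such paths, so no such vertex need exist. You flag this yourself as "the main obstacle," but that obstacle is the theorem. The sketch of the $4k-9$ refinement has the same problem and, as written ("the loss of the $6|V_3|$ term"), is not an argument.

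The paper's proof resolves both difficulties by a different decomposition. It splits on whether every vertex is adjacent to at least two vertices of $T=\{x_1,x_2,x_3\}$. If so, it sets $A=N(x_1)\cap N(x_2)\setminus\{x_3\}$, $B=N(x_2)\cap N(x_3)\setminus\{x_1\}$, $C=N(x_3)\cap N(x_1)\setminus\{x_2\}$, notes $|A|+|B|+|C|\le k-1$ (at most one vertex lies in all three sets because $T$ is a face), and bounds the paths between $x_1$ and $x_2$ by $2|A|+|B|+|C|$ — applying Lemma~\ref{2(k-1)} only to the paths internal to $A$, and counting the cross terms separately — so that the cyclic sum is $4(|A|+|B|+|C|)\le 4(k-1)$, with the $4k-9$ case falling out of the same count when the triple intersection is empty. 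If instead some vertex $x$ has at most one neighbor in $T$, it either deletes $x$ (when $x$ is on at most $4$ paths) or contracts an edge at $x$ in a way that strictly increases the number of paths between vertices of $T$, and then applies induction to the $(k-1)$-vertex graph; this contraction step is what replaces your unproven "locate a suitable $v$." If you want to salvage your outline, you would need to import both of these ingredients, at which point you have reproduced the paper's proof.
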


\begin{proof}
First, we will prove the lemma holds in the case where every vertex of $G$ is adjacent to at least two vertices of $T$. 
Let $x_1,x_2,x_3$ be the vertices of $T$, and let $A = N(x_1)\cap N(x_2)\setminus\{x_3\}$, $B = N(x_2)\cap N(x_3)\setminus\{x_1\}$ and $C = N(x_3)\cap N(x_1)\setminus\{x_2\}$.  
Note that there is at most one vertex in the intersection $N(x_1) \cap N(x_2) \cap N(x_3)$. Indeed, if there were two such vertices, and we added another vertex to the interior of $T$ and connected it to each vertex of $T$, then the resulting graph  would be a planar graph containing a $K_{3,3}$, a contradiction.

Note that the only vertices in $N(x_1)\cup N(x_2) \cup N(x_3)$ which are not in $A\cup B\cup C$ are the vertices of $T$. 
Thus $\abs{A}+\abs{B}+\abs{C}\leq k-3+2\leq k-1$, since the $A\cup B \cup C = V(G)\setminus T$ and at most one vertex is in  $A \cap B\cap C$. 

We now distinguish two cases depending on whether  there is a vertex adjacent to the three vertices of $T$.

\begin{figure}[bht]
\centering
\begin{tikzpicture}[scale=.6 ] 
\draw[thick,red] (210:2cm) -- (0,2) --  (.9,2.2) -- (330:2cm);
\draw[shift= {(210:2cm)},rotate=-106,thick,blue] (0,0) arc (0:-180: 3.13cm and 1.2cm);
\draw[thick,blue,smooth] (1.3,3.5) -- (0,5) (1.3,3.5)  -- (330:2cm);
\filldraw   (90:2cm)  node[left]{$x_3$} circle (3pt)  -- (330:2cm)  node[below]{$x_2$}circle (3pt) -- (210:2cm) node[below]{$x_1$} circle (3pt)  ;
\filldraw (.9,2.2) circle (3pt);
\filldraw (90:2cm)    --  (1.1,2.8)  circle (3pt) -- (330:2cm);
 \filldraw (90:2cm)    --  (1.3,3.5)  circle (3pt) ;
\draw[shift= {(330:2cm)},rotate=-74] (0,0) arc (0:180: 3.13cm and 1.2cm);

\filldraw   (90:5cm) node[above]{$u$} circle (3pt) -- (90:2cm);
\node[] at (270:1.7) {$A$};   
\node[] at (60:1.7) {$B$};
\node[] at (120:1.7) {$C$};
\end{tikzpicture}
\caption{Two possible paths of length~$3$ from $x_1$ to $x_2$.}
\label{Figure:Face_T}
\end{figure}

\textbf{Case 1.}
There is a vertex $u$ adjacent to every vertex of $T$, see Figure~\ref{Figure:Face_T}.
The vertex $u$ can have at most one neighbor in each of the sets $A$, $B$ and $C$. Suppose not, if $u$ had two neighbors in the same set, say $y_1,y_2 \in A$, then each vertex from $\{x_1,y_1,y_2\}$ is adjacent to every vertex of $\{u,x_2,x_3\}$, which is a contradiction to the planarity of $G$. 
The number of paths of length~$3$ from $x_1$ to $x_2$ using $x_3$ is precisely $|B|+|C|$, since each such path is of the form $x_1x_3bx_2$ with $b\in B$ or $x_1cx_3x_2$ with $c\in C$.  
There are at most two paths of length~$3$ from $x_1$ to $x_2$ using $u$ and a vertex not in $A$ since $u$ has at most one neighbor in $B$ and at most one neighbor in $C$ and we are under the assumption that  every vertex of $G$ is adjacent to at least two vertices of $T$. 
Since, there is no edge between $A$ and $B$, $B$ and $C$ or $C$ and $A$, then all other paths of length~$3$ from $x_1$ to $x_2$ have internal vertices only from $A$, inducing an edge. There are at most $|A|-1$ edges in $G[A]$ by Lemma \ref{pforest}, hence we have at most $2|A|-2$ such paths. 
Thus, we have at most $2|A|+|B|+|C|$ paths of length~$3$ from $x_1$ to $x_2$. Similarly we have at most  $|A|+2|B|+|C|$ paths of length~$3$ from $x_2$ to $x_3$  and at most $|A|+|B|+2|C|$ paths of length~$3$ from $x_3$ to $x_1$. Thus, in total, we have at most $4(|A|+|B|+|C|) = 4(k-1)$ paths of length~$3$ between vertices of $T$.

\textbf{Case 2.}
 There is no vertex adjacent to all three vertices of $T$, see Figure~\ref{Figure:Face_r}. Thus every vertex of $G$ is adjacent with exactly two neighbours of $T$.
 Since $G$ is planar it is easy to observe that, there is at most one edge between $A$ and $B$,  between $B$ and $C$, and between $C$ and $A$. 
For each edge $e$ of $G$, let us count the number of  paths of length $3$ with terminal vertices in $T$  containing $e$ as the middle edge. 
Note that if $e\subset T$, then there is no such a path. 
 If $\abs{e\cap T}=1$, then $e=\{y,x_i\}$ for some $i\in\{1,2,3\}$, the vertex $y$ has  two neighbors $x_i$ and $x_j$ in $T$ for some $j\neq i$ in $\{1,2,3\}$, then $x_{6-i-j}x_iyx_j$ is the unique path of length three containing $e$ as the middle edge.
\begin{figure}[h]
\centering
\begin{tikzpicture}[scale=0.15]
\draw[thick,red](0,10)--(8.7,-5)--(-8.7,-5)--(0,10);
\draw[thick](8.7,-5)--(8.7,5)--(0,10) (-8.7,-5)--(-8.7,5)--(0,10)(-8.7,-5)--(0,-10)--(8.7,-5);
\draw[thick](8.7,-5)--(13,7.5)--(0,10)(-8.7,-5)--(-13,7.5)--(0,10)(-8.7,-5)--(0,-15)--(8.7,-5);
\draw[thick](8.7,-5)--(17,10)--(0,10)(-8.7,-5)--(-17,10)--(0,10)(-8.7,-5)--(0,-20)--(8.7,-5);
\draw[rotate around={30:(13,7.5)},blue] (13,7.5) ellipse (8 and 4);
\draw[rotate around={150:(-13,7.5)},blue] (-13,7.5) ellipse (8 and 4);
\draw[rotate around={270:(0,-15)},blue] (0,-15) ellipse (8 and 4);

\draw[fill=black] (0,10) circle (10pt);
\draw[fill=black] (8.7,-5) circle (10pt);
\draw[fill=black] (-8.7,-5) circle (10pt);

\draw[fill=black] (8.7,5) circle (10pt);
\draw[fill=black] (-8.7,5) circle (10pt);
\draw[fill=black] (0,-10) circle (10pt);

\draw[fill=black] (13,7.5) circle (10pt);
\draw[fill=black] (-13,7.5) circle (10pt);
\draw[fill=black] (0,-15) circle (10pt);

\draw[fill=black] (17,10) circle (10pt);
\draw[fill=black] (-17,10) circle (10pt);
\draw[fill=black] (0,-20) circle (10pt);
\node at (0,-16.5) {$a_1$};
\node at (0,-21.5) {$a_2$};
\node at (14.5,8.5) {$b_1$};
\node at (18.5,11) {$b_2$};
\node at (0,11.5) {$x_3$};
\node at (-10.4,-6) {$x_1$};
\node at (10.4,-6) {$x_2$};
\node[blue] at (18,4) {$B$};
\node[blue] at (-18,4) {$C$};
\node[blue] at (5,-15) {$A$};
\end{tikzpicture}
\label{Figure:Face_r}
\caption{ There is no vertex adjacent to all three vertices of $T$.}
\end{figure}

There are $2(\abs{A}+\abs{B}+\abs{C})$ choices for an edge $e$ 
such that $\abs{e\cap T}=1$, hence there are the same number of  paths  of length $3$ with terminal vertices in $T$ containing such a middle edge.

If $e\cap T=\emptyset$ and  $e$ is contained either in $A$, $B$ or $C$, then there are two  paths of length three containing $e$ as the middle edge with terminal vertices in~$T$. 
The number of such edges is at most $\abs{A}-1+\abs{B}-1+\abs{C}-1$ by Lemma~\ref{pforest}. Hence there are at most $2(\abs{A}-1+\abs{B}-1+\abs{C}-1)$ paths of length $3$ with $e$ as a middle edge.  
If $e\cap T=\emptyset$ and  if $e$ is contained neither in $A$ nor in $B$ nor in $C$, then such an edge is in three paths of length $3$.
The number of such edges is at most three.
Hence there are at most $9$ such paths.
The number of such edges is at most $\abs{A}-1+\abs{B}-1+\abs{C}-1$ by Lemma~\ref{pforest}. 
Hence there are at most $2(\abs{A}-1+\abs{B}-1+\abs{C}-1)$ paths of length $3$ with terminal vertices in $T$ and a middle edge satisfying the conditions.  
If $e\cap T=\emptyset$ and  $e$ is not contained  in $A$ or $B$ or  $C$ then such an edge is in three paths of length $3$ with terminal vertices in $T$. 
The number of such edges is at most three. Hence there are at most $9$ such paths.
Finally,  since $|A|+|B|+|C|=k-3$, we  have at most $4(|A|+|B|+|C|)+3 = 4k-9$ paths of length~$3$ with terminal vertices in $T$.

Now we prove the remaining case. Suppose there is a vertex $x$ which is adjacent to at most one vertex of $T$. 
If $x$ is in at most four paths of length~$3$ between the vertices of $T$, then we can remove $x$ and the result would follow by induction (induction on $k$ of the statement of the lemma; the base case $k=4$ trivially holds). 
If $x$ is in at least five such paths, then $x$ must have a neighbor in $T$, say $x_1$, 
hence these paths have the form $x_1xyx_i$ for some $y$ and $i=2$ or $i=3$. 
By the pigeonhole principle, three of these paths use the same $i$, without loss of generality, there are three paths of the form $x_1xy_1x_2$, $x_1xy_2x_2$ and $x_1xy_3x_2$. Thus, we have that one of $y_1$, $y_2$ or $y_3$, say $y_2$,  is adjacent to neither $x_1$ nor $x_3$. 
Therefore, by contracting the edge $\{x,y_2\}$ to a vertex $z$, the number of paths of length~$3$ between the vertices of $T$ increases by one. Since the only path that is lost is $x_1xy_2x_2$, while the two new paths $x_1zx_2x_3$ and $x_3x_1zx_2$ appear. Thus, we are done by induction in this case also.
\end{proof}

\subsection{Proof of Theorem~\ref{c5}}

\begin{proof}[Proof of Theorem \ref{c5}]

Let us define the following function $g(n) = 2n^2-10n+12$ for $n\neq 7$, and $g(7) = 2\cdot7^2-10\cdot7+12+1 = 41$. It is straightforward to verify that the graph $D_n$ contains $g(n)$ copies of $C_5$. Thus we have a lower bound $f(n,C_5) \geq g(n)$, for $n\geq 6$. For $n=5$ the unique maximal planar graph is $D_5$ and it contains six cycles of length~$5$.

For the upper bound, let $G$ be a maximal planar graph that maximizes the number of copies of~$C_5$. 
We may suppose $G$ is a triangulated planar graph on $n$ vertices and that $G$ is embedded in the plane.   
The proof proceeds by induction on $n$ for the statement that $f(n,C_5) \le g(n)$ and equality is attained only for the graphs given in the statement of Theorem~\ref{c5}. 
For the base case $n=5$, we are already done, since $f(5,C_5) = 6 \le 12 = g(5)$. 

For every $(n-1)$-vertex planar graph $G'$ we have 
\begin{equation}\label{Equation:Induction_main}
f(n,C_5) \le f(n-1,C_5) + \N(C_5,G) - \N(C_5,G')\le g(n-1) + \N(C_5,G) - \N(C_5,G'),    
\end{equation}
since $f(n,C_5) =\N(C_5,G)$ and $f(n-1,C_5)  - \N(C_5,G') \geq 0$ .
Note that for $n\not\in \{7,8\}$ we have $g(n)-g(n-1) = 4(n-3)$, while $g(7)-g(6) =4(7-3)+1= 17$ and $g(8)-g(7)=4(8-3)-1=19$. 
Therefore, for every $n$ such that $n\geq 6$ and $n\neq 8$, the desired upper bound of $f(n,C_5)$ follows from Equation~\ref{Equation:Induction_main} hypothesis by defining an $(n-1)$-vertex graph $G'$, such that  
\begin{equation}\label{Equation:Induction_second}
\N(C_5,G) - \N(C_5,G')\leq 4(n-3).    
\end{equation} 
For $n=8$ the desired upper bound of $f(8,C_5)$ follows from Equation~\ref{Equation:Induction_main} hypothesis by defining an $7$-vertex graph $G'$, such that   $\N(C_5,G) - \N(C_5,G')\leq  4(8-3)-1=19$. 



Every planar graph contains a vertex of degree at most five. Moreover, every maximal planar graph, with at least four vertices has the minimum degree at least three.
The rest of the proof is divided into three major cases. 
Case 1. deals with the case when $G$ contains a vertex $v$ of degree four. Note that that is the case when equality is attained. After we may assume there is no vertex of degree four and the rest of the proof is split into two remaining cases depending on the minimum degree of $G$, which is either three or five. 
Note that Case 2. deals with the case when the minimum degree of $G$ is three and $G$ does not contain any vertex of degree four. 


    \textbf{Case 1.} The graph $G$ has a vertex $v$ of degree $4$. 
    
    Let $\{v_1,v_2,v_3,v_4\}$ be the neighborhood of $v$ such that $v_1v_2v_3v_4v_1$ is a cycle.  Note that it is not possible for both of the edges $\{v_1,v_3\}$ and $\{v_2,v_4\}$ to be present in $G$ since otherwise, we would have a copy of $K_5$ in $G$. 

Without loss of generality, suppose that  the edge $\{v_2,v_4\}$ is not present, and consider the graph $G'$ obtained by removing $v$ from $G$ and adding the edge $\{v_2,v_4\}$. 
We are going to bound $\N(C_5,G)-\N(C_5,G')$. Equivalently, we bound the number of copies of $C_5$ in $G$ containing $v$ minus the number of copies of $C_5$ in $G'$ that use the edge $\{v_2,v_4\}$.

We say that a 5-cycle in $G$ containing $v$ is of \emph{type-$k$},  if it contains precisely $k$ vertices which are not in $N[v]$, and we say that a $5$-cycle in $G'$ is of \emph{type-$k$}, if it contains the edge $\{v_2,v_4\}$ and precisely $k$ vertices which are not in $N(v)$.
Let $\N_k$ be the number of cycles of type-$k$ in $G$ minus the number of cycles of type-$k$ in $G'$. Note that $\N_3 \leq 0$ and $\N_k = 0$ for $k\not\in\{0,1,2,3\}$, hence we have  
\[
\N(C_5,G)-\N(C_5,G') = \N_0 + \N_1 + \N_2 + \N_3 \leq \N_0 + \N_1 + \N_2.
\]
Note that $\N_0$ is just the number of $5$-cycles in the graph induced by $N[v]$, thus $\N_0$ is either~$4$ or~$6$ depending on whether $\{v_1,v_3\}$ is present. 
We say that a path is \emph{internal} if all of its vertices are in~$N[v]$. 
We say a path is \emph{external} if all of its non-terminal vertices are not in $N[v]$. 
Note that a path can be neither external nor internal.

We will estimate the number of type-$1$ cycles containing a vertex $x\in V(G)\setminus N[v]$ according to whether the two neighbors of $x$ in the type-$1$ cycles are consecutive or nonconsecutive vertices of the $4$-cycle $v_1v_2v_3v_4v_1$. 

For each $x \in (N(v_i) \cap N(v_{i+1}))\setminus N[v]$ for  $i\in[4]$ (where indices are taken modulo $4$) we have the following cycles of type-$1$ in $G$ containing the path $v_ixv_{i+1}$: $vv_ixv_{i+1}v_{i+2}v$ and $vv_{i-1}v_ixv_{i+1}v$.  
If $\{v_1,v_3\}$ is an edge, then we also have exactly one of the cycles $vv_ixv_{i+1}v_{i-1}v$ or $vv_{i+2}v_ixv_{i+1}v$ (according to whether $i$ is odd or even), moreover in this case, one of the following cycles is a type-$1$ cycle of $G'$: $v_{i+2}v_ixv_{i+1}v_{i-1}v_{i+2}$ or $v_{i-1}v_{i+2}v_ixv_{i+1}v_{i-1}$ (see Figure~\ref{d(v)=4})

Note that whenever $(N(v_i) \cap N(v_{i+2}))\setminus N[v]$ is nonempty we have that $\{v_{i-1},v_{i+1}\}$ is not an edge of $G$.
Let $x \in (N(v_i) \cap N(v_{i+2}))\setminus N[v]$.   We have the following four cycles of type-$1$ in $G$ containing $x$: $vv_{i}xv_{i+2}v_{i-1}v$, $vv_{i}xv_{i+2}v_{i+1}v$, $vv_{i-1}v_{i}xv_{i+2}v$, $vv_{i+1}v_{i}xv_{i+2}v$. 
If $i = 1$ (or $i=3$), we have in $G'$ the type-$1$ cycles $v_{2}v_{1}xv_{3}v_{4}v_{2}$ and $v_{4}v_{1}xv_{3}v_{2}v_{4}$ (see Figure~\ref{d(v)=4}). 
For $i=2$ (or $i=4$), we have no type-$1$ cycles in $G'$ using $v_2xv_4$.
\begin{figure}[b]   
\centering
\begin{tikzpicture}

\draw[red,thick] (0,-1) arc(270:90:0.2cm and 1cm);
\draw (1,0) node[right]{$v_3$}  -- (0,0)  -- (0,1) node[above]{$v_2$}  -- (-1,0) node[left]{$v_1$}  -- (0,0) -- (0,-1)node[below]{$v_4$}   -- (1,0) -- (0,1)    (-1,0) -- (0,-1);
\filldraw (1,0) circle (2pt)  (0,0) circle (2pt)  (0,1) circle (2pt)  (-1,0) circle (2pt)  (0,-1) circle (2pt);

\end{tikzpicture}\quad
\begin{tikzpicture}

\draw[blue,thick] (0,2) -- (1,0)  (0,1) -- (-1,0) -- (0,2); 
\draw[blue,thick,dashed](1,0) -- (0,0) -- (0,1);
\draw[red,dotted,thick] (0,-1) arc(270:90:0.2cm and 1cm) (0,-1) -- (1,0) ;
\draw (1,0) node[right]{$v_3$}  (0,0) node[below]{$v$}  (0,1) node[above]{$v_2$}   (-1,0) node[left]{$v_1$}  (0,0)  (0,-1)node[below]{$v_4$}    (1,0) -- (0,1)    (-1,0) -- (0,-1);
\filldraw (1,0) circle (2pt)  (0,0) circle (2pt)  (0,1) circle (2pt)  (-1,0) circle (2pt)  (0,-1) circle (2pt);

\filldraw (0,2) node[left]{$x$} circle (2pt);

\end{tikzpicture}\quad
\begin{tikzpicture}

\draw[blue,thick] (1,-1) -- (1,0) (1,0) arc(0:180:1cm and 2cm)  (0,-1)--(1,-1) ; 
\draw[red,dotted,thick] (0,-1) arc(-90:90:0.2cm and 1cm) (-1,0)--(0,1) ;
\draw[dashed,blue,thick] (-1,0) -- (0,0)--(0,-1);
\draw (1,0) node[right]{$v_3$}  (0,0)node[above]{$v$}  (0,1) node[above]{$v_2$}   (-1,0) node[left]{$v_1$}  (0,0)  (0,-1)node[below]{$v_4$}    (1,0) -- (0,1)    (-1,0) -- (0,-1) (0,-1) -- (1,0) ;
\filldraw (1,0) circle (2pt)  (0,0) circle (2pt)  (0,1) circle (2pt)  (-1,0) circle (2pt)  (0,-1) circle (2pt);

\filldraw (1,-1) circle (2pt) node[right]{$x$};

\end{tikzpicture}\quad
\begin{tikzpicture}

\draw[red,dotted,thick] (0,-1) arc(270:90:0.2cm and 1cm)  (0,-1)--(1,0);
\draw[blue,thick] (.5,1.5) -- (1.5,.5) -- (1,0) (0,1) -- (.5,1.5); 
\draw[blue,thick,dashed] (1,0) -- (0,0) -- (0,1);

\draw (1,0) node[right]{$v_3$}  (0,0)node[below]{$v$}  (0,1) node[above]{$v_2$} --  (-1,0) node[left]{$v_1$}   (0,0)  (0,-1)node[below]{$v_4$}    (1,0) -- (0,1)    (-1,0) -- (0,-1);
\filldraw (1,0) circle (2pt)  (0,0) circle (2pt)  (0,1) circle (2pt)  (-1,0) circle (2pt)  (0,-1) circle (2pt);

\filldraw (1.5,.5) node[above]{$y$} circle (2pt)  (.5,1.5) node[above]{$x$} circle (2pt) ;

\end{tikzpicture}

\caption{The figure shows several cycles in $G$ containing $v$ which can be transformed into cycles in $G'$ with the additional edge $\{v_2,v_4\}$; the new cycles are obtained by replacing the two dashed edges by the two dotted edges.}
\label{d(v)=4}
\end{figure}
Therefore, we have 
\begin{equation}
\label{N1}
\N_1 = \displaystyle \sum_{1\leq i < j \leq 4}h(i,j)\abs{(N(v_i)\cap N(v_j))\setminus N[v]},
\end{equation}
where $h(2,4) = 4$ and $h(i,j) = 2$ for every other pair $(i,j)$ with $1 \le i<j \le 4.$


Note that every type-$2$ cycle in $G$ contains two vertices from $\{v_1,v_2,v_3,v_4\}$. 
Therefore there are two kinds of type-$2$ cycles in $G$, those containing two consecutive vertices of the cycle $v_1v_2v_3v_4v_1$ and those containing two opposite vertices of the cycle $v_1v_2v_3v_4v_1$. 
For each type-$2$ cycle in $G$ containing two consecutive vertices of  the cycle $v_1v_2v_3v_4v_1$ we find a corresponding type-$2$ cycle in~$G'$. 
Indeed, let  $vv_ixyv_{i+1}v$ be a  type-$2$  cycle in~$G$, then  either $v_2v_ixyv_{i+1}v_2$ or $v_4v_ixyv_{i+1}v_4$ is a type-$2$ cycle in~$G'$ depending on the parity of $i$, see Figure~\ref{d(v)=4}.  
Note that each of these type-$2$ cycles in~$G'$ contains exactly three vertices from  $\{v_1,v_2,v_3,v_4\}$, and they are consecutive on the cycle.
Hence, for estimating $\N_2$ we need only to consider type-$2$ cycles in $G$ containing two opposite vertices of the cycle $v_1v_2v_3v_4v_1$ and type-$2$ cycles of $G'$ not containing exactly three vertices from~$\{v_1,v_2,v_3,v_4\}$.

\textbf{Case 1.1} The edge $\{v_1,v_3\}$ is present. 

If $n=6$, then $G$ is isomorphic to $E_6$, which contains $20$ cycles of length~$5$. 
Hence we may suppose $n\geq 7$.

In this case there is no path joining $v_2$ and $v_4$ without using $v$, $v_1$ or $v_3$. 
It follows that the number of type-$2$ cycles in $G$ is at most the number of  external paths of length~$3$ from $v_1$ to $v_3$.  
Note that there is at most one vertex in $V(G) \setminus N[v]$ which is adjacent to $v_1$, $v_2$ and $v_3$ as well as at most one vertex adjacent to $v_3$, $v_4$ and $v_1$. 
Every other vertex is adjacent to at most two vertices from $N(v)$. 

We will assume that there exists a vertex $u\neq v$, adjacent to $v_1,v_2,v_3$ and a vertex  $w \not\in \{v, u\}$, adjacent to $v_3,v_4,v_1$, since the other cases follow from a similar argument. 
We have that $\N_0 = 6$ and $\N_1 \leq 12 + 2(n-7)$ by~\eqref{N1}. Indeed, this is true since both $u$ and $w$ are each contained in six type-$1$ cycles, and every other vertex of $V(G) \setminus N[v]$ is contained in at most two  type-$1$ cycles. Equality holds if and only if every vertex from $V(G)\setminus N[v]$ other than $u$ and $w$ is adjacent to precisely two vertices of $N(v)$.  

By applying Lemma~\ref{2(k-1)} to the  regions determined by the triangles $v_1v_2v_3v_1$ and $v_3v_4v_1v_3$ (containing $u$ and $w$, respectively), we have at most $2(n-7)$ external paths of length~$3$ from $v_1$ to $v_3$.  There are at least two type-$2$ cycles in $G'$ containing exactly two vertices of  $\{v_1,v_2,v_3,v_4\}$, namely $uv_2v_4wv_3u$ and $uv_2v_4wv_1u$, see Figure~\ref{d(v)=4b}. Hence $\N_2 \leq 2(n-7) -2$. Thus, we have 
\[
\N(C_5,G) - \N(C_5,G') = \N_0 + \N_1 + \N_2 \leq 4(n-3),\]
and we are done by Equation~\ref{Equation:Induction_second}.
Note that if every vertex in $V(G)\setminus N[v]$ besides $u$ and $w$ is adjacent to exactly two vertices of $N(v)$, then there are $3n-6-9-2(n-5)-2= n-7$ edges not incident with any vertex in $N(v)$. Since every $4$-vertex external path from $v_1$ to $v_3$ is of the form $v_1xyv_3$, where the edge $\{x,y\}$ is not incident to a vertex from $N[v]$, we have a bound of $2(n-7)$ on the number of such paths, and equality holds only if every such vertex in $V(G)\setminus N[v]$ is adjacent to both $v_1$ and $v_3$.
Therefore, either $G$ is the graph $E_n$, which has $2n^2-10n+8<g(n)$ copies of $C_5$, or $\N(C_5,G) - \N(C_5,G') < 4(n-3)$, and we are done by Equation~\ref{Equation:Induction_second}. 
Hence for $n\neq 8$ we are done. On the other hand, for $n=8$ it is simple to observe that  $G'$ is not isomorphic to $D_7$, so $\N(C_5,G')< 40$ since the induction hypothesis that asserts that the extremal graph is unique for $n=7$, and we are done.

\textbf{Case 1.2} Suppose the edge $\{v_1,v_3\}$ is not present and   $N(v_1)\cap N(v_2) \cap N(v_3) \cap N(v_4)=\{v\}$.  

In this case $\N_0 = 4$.
Since $G$ contains no $K_5$-minor there is no external path of length two from $v_{i}$ to $v_{i+2}$ for some  $i\in\{1,2\}$. 
Thus, without loss of generality, we may assume $N(v_2)\cap N(v_4) = \{v\}$.  
Since $G$ is $K_{3,3}$-free there is at most one vertex $u\not =v$ adjacent to $v_1,v_2,v_3$, and there is at most one vertex $w\not=v$ adjacent to $v_3,v_4,v_1$.  
We will assume that indeed there exist such vertices $u$ and $w$ and obtain the desired upper bound  $\N(C_5,G) - \N(C_5,G')\leq 4(n-3)$ and we are done by Equation~\ref{Equation:Induction_second}. we omit the cases when one of these vertices does not exist since they follow from the same argument.
Since if either the vertex $u$ or the vertex $w$ does not exist the number of $C_5$'s incident with $v$ will decrease.  
Indeed, if there is no vertex $u$ incident with $v_1,v_2,v_3$ then there is at most one edge $\{u',u''\}$ such that $u'$ is adjacent with $v_1$, $v_2$ and $u''$ is adjacent with $v_2$, $v_3$. After contracting the edge $\{u',u''\}$ to a vertex the number of five cycles incident with $v$ decreases by one, the number of vertices of the graph decreases by one and the resulting graph has a vertex incident with all three vertices $v_1$, $v_2$ and $v_3$. Thus the result assuming the existence of $u$ implies that case.   If edge $\{u',u''\}$ such that $u'$ is adjacent with $v_1$, $v_2$ and $u''$ is adjacent with $v_2$, $v_3$ is not an edge of $G$ then no argument in the following proof changes. 

Let $X$ be the set of vertices in $V(G)\setminus N(v)$ which are adjacent to precisely two vertices of~$N(v)$, and let $Y$ be the set of vertices that are adjacent to just one vertex of $N(v)$.  
By~\eqref{N1} we have that 
$\N_1 \leq 12 + 2\abs{X}.$

 \textbf{Case~1.2.1} The vertices $w$ and $u$ are adjacent. 
 
 We have an external path of length~$3$ from $v_2$ to $v_4$, namely $v_2uwv_4$.
 Furthermore, we have two external paths of length~$3$ from $v_1$ to $v_3$, namely $v_1uwv_3$ and $v_1wuv_3$. 
For each vertex $z \in V(G)\setminus N[v]\setminus\{u,w\}$, 
there is no external path of length~$3$ from $v_2$ to $v_4$ incident with $z$. 
Any  external path of length~$3$ from $v_1$ to $v_3$ incident with $z$ contains either $u$ or $w$. 
Moreover, if $z$ is incident with two such paths, then $z$ must be adjacent to both $u$ and $w$.  
If $z$ is adjacent to both $u$ and $w$, then there is one type-$3$ cycle in $G'$ given by $zuv_2v_4wz$. 
Note that it contains only two vertices from $N(v)$, see Figure~\ref{d(v)=4b}. 
Therefore $z$ contributes at most one to $\N_2+\N_3$.
For every $x\in X$, since $x$ must be adjacent to two consecutive vertices of $N(v)$, we have a type-$2$ cycle in $G'$.
For instance, if $x$ is adjacent to $v_1$ and $v_2$, then we have the $5$-cycle $xv_2v_4wv_1x$, and this cycle contains three vertices of $N(v)$ which are not consecutive on the cycle. 
Hence the vertices from $X$ contribute nothing to $\N_2+\N_3$. 
Since the only  external paths of length~$3$ incident with either $u$ or $w$ from $v_i$ to $v_{i+2}$ for $i\in\{1,2\}$ are $v_1uwv_3$, $v_2uwv_4$ and $v_3uwv_1$,  
on the other hand, we have the following type-$2$ cycles in $G'$: $uv_2v_4wv_3u$ and $uv_2v_4wv_1u$, which contain nonconsecutive vertices of  $N(v)$  (see Figure~\ref{d(v)=4b}), 
then the vertices $u$ and $w$ together contribute one to the sum  $\N_2+\N_3$. 
Finally, we have 
\[
\N_2+\N_3 \leq 1 + (n-7) - \abs{X}.
\]
Therefore if $n\geq 8$, we have 
\[
\N(C_5,G) - \N(C_5,G') \leq 17 + (n-7)+\abs{X}\leq 17+2(n-7)<4(n-3),
\]
and we are done by Equation~\ref{Equation:Induction_second}.
If $n = 7$,  then $G$ is isomorphic to $D_7$ and so $\N(C_5,G)=41$.   
For $n = 8$, we have that $\N(C_5,G) - \N(C_5,G') \leq 19$. However, it is easy to observe that $G'$ is not isomorphic to $D_7$. Therefore $\N(C_5,G)\leq \N(C_5,D_7)-1+19=59<60$.

\begin{figure}[thb]   
\centering

\begin{tikzpicture}
\draw[blue,dashed] (1,0) -- (0,0) -- (-1,0);
\draw[red,thick] (0,2) -- (-2,0) --(0,-2) (0,1) --(0,2) (0,-1) --(0,-2)  ;
\draw[blue,thick] (0,2) arc (90:-90:1.6cm and 2cm)    (-1,0) -- (0,2)  (0,-2) -- (1,0) ; 
\draw[red,dotted,thick] (0,-1) arc(270:90:0.2cm and 1cm) ;
\draw (1,0) node[right]{$v_3$}  (0,0) node[below]{$v$}  (0,1) node[right]{$v_2$}   (-1,0) node[left]{$v_1$}  (0,0)  (0,-1)node[right]{$v_4$}    (1,0) -- (0,1)    (-1,0) -- (0,-1);
\filldraw (1,0) circle (2pt)  (0,0) circle (2pt)  (0,1) circle (2pt)  (-1,0) circle (2pt)  (0,-1) circle (2pt);
\filldraw (0,2) node[left]{$u$} circle (2pt)  (-2,0) circle (2pt)  node[left]{$z$};
\filldraw (0,-2) node[left]{$w$} circle (2pt);
\draw  (0,-1) -- (1,0)  (0,1) -- (-1,0) (0,2) -- (1,0)  (0,-2) -- (-1,0); 
\end{tikzpicture}\quad
\begin{tikzpicture}
\draw[red,thick] (0,2) -- (1,0) --(0,-2) (0,1) --(0,2) (0,-1) --(0,-2)  ;

\draw[red,dotted,thick] (0,-1) arc(270:90:0.2cm and 1cm) ;
\draw (1,0) node[right]{$v_3$}  (0,0) node[below]{$v$}  (0,1) node[right]{$v_2$}   (-1,0) node[left]{$v_1$}  (0,0)  (0,-1)node[right]{$v_4$}    (1,0) -- (0,1)    (-1,0) -- (0,-1);
\filldraw (1,0) circle (2pt)  (0,0) circle (2pt)  (0,1) circle (2pt)  (-1,0) circle (2pt)  (0,-1) circle (2pt);
\filldraw (0,2) node[left]{$u$} circle (2pt) ;
\filldraw (0,-2) node[left]{$w$} circle (2pt);
\draw  (0,-1) -- (1,0)  (0,1) -- (-1,0)   (0,-2) -- (-1,0) (-1,0) -- (0,2) ; 
\end{tikzpicture}\quad

\caption{The first picture shows a type-$3$ cycle in $G'$ in Case~1.1 when $u,w$ 
have a common neighbor and a type-$2$ cycle in $G$ when the edge $\{u,w\}$ is present. The second picture shows one of the two type-$2$ cycles in $G'$ that uses both $u$ and $w$.} 
\label{d(v)=4b}
\end{figure}
 \textbf{Case~1.2.2}
The vertices $u$ and $w$ are not adjacent. 

For this case to be possible  we have $n\geq 8$.
Furthermore, if $n=8$ the graph $G$ is well defined and it is isomorphic to $D_8$. 
Hence, from here on we may assume $n>8$. 

Note that there is no external path of length~$3$ from $v_2$ to $v_4$. 
Now we show that there are at most $2(n-6)$ external paths of length~$3$ from $v_1$ to $v_3$.  
Consider an auxiliary graph $G''$ obtained from $G$ by removing $v$ and adding an edge between the vertices $v_1$ and $v_3$.
Note that all external paths of length~$3$ from $v_1$ to $v_3$ which exist in $G$ also exist in $G''$. By Lemma~\ref{2(k-1)}, there are $2(n-4)$ paths of length~$3$ in $G''$ from $v_1$ to $v_3$. 
However, some of those paths are not external in $G$.
In particular, the following are non-external paths of length three in $G$ from $v_1$ to $v_3$: $v_1v_2uv_3$, $v_1v_4wv_3$, $v_1uv_2v_3$ and $v_1wv_4v_3$. 
Hence there are at most $2(n-6)$ external paths of length~$3$ from $v_1$ to $v_3$.
  
There are at least two additional type-$2$ cycles in $G'$, namely $uv_2v_4wv_3u$ and $uv_2v_4wv_1u$ (they contain three vertices of $N(v)$ and they are not consecutive). 
Hence we have  $\N_2 \leq 2(n-6)-2$. 
Therefore we have the desired upper bound
\[
\N(C_5,G) - \N(C_5,G') \leq 14 +2(n-6) + 2\abs{X} \leq  4(n-3),
\]
and we are done by Equation~\ref{Equation:Induction_second}.
Moreover, equality holds only if $\abs{X} = n-7$. 
That is if every vertex in $V(G)\setminus N[v]$ except for $u$ and $w$ is adjacent to precisely two vertices of $N(v)$. 
Thus, there are $n-6$ edges not incident with a vertex from $N(v)$, so the number of external paths of length~$3$ from $v_1$ to $v_3$ is at most $2(n-6)$.
Equality holds only if each vertex in $V(G) \setminus N(v)$ is adjacent to both $v_1$ and $v_3$. Consequently, $G$ is isomorphic to $D_n$.

\textbf{Case 1.3} We have $N(v_1)\cap N(v_2)\cap N(v_3)\cap N(v_4)=\{v,u\}$ for some vertex $u\neq v$. 

In this case the edge $\{v_1,v_3\}$ is not present.
If $n=6$ then $G$ is isomorphic to $D_6$ and we are done. 
Suppose $n\geq 7$. 
Note that there is no vertex distinct from $u$ in $V(G) \setminus N[v]$ adjacent to $v_i$ and $v_{i+1}$ for some $i\in\{1,2\}$.
Let $X$ be the set of vertices that are adjacent to precisely two vertices of $N(v)$, and let $Y$ be the set of vertices that are adjacent to exactly one vertex of $N(v)$.
Since 
\[\displaystyle \sum_{1\leq i < j \leq 4}h(i,j)\abs{(N(v_i)\cap N(v_j))\setminus N[v]} =|X|+4,\] and $N(v_2)\cap N(v_4)=\{u,v,v_1,v_3\}$, it follows by~\eqref{N1} that 
\begin{align*}
\N_1 &= \displaystyle \sum_{1\leq i < j \leq 4}h(i,j)\abs{(N(v_i)\cap N(v_j))\setminus N[v]} \\&= 2\sum_{1\leq i < j \leq 4}\abs{(N(v_i)\cap N(v_j))\setminus N[v]} + 2|N(v_2)\cap N(v_3)\setminus N[v]| \\&=
14 + 2\abs{X}.
    \end{align*}



Observe that every vertex $y\in Y$ is in at most one external path of length three between opposite vertices of the cycle $v_1v_2v_3v_4v_1$. 
Moreover, such an external path of length three contains the vertex~$u$, see Figure~\ref{d(v)=4c}.
Similarly, every vertex $x$ from $x\in X$ can be contained in at most two external paths of length three between opposite vertices of the cycle $v_1v_2v_3v_4v_1$. 
Moreover, if such a path exists, then $x$ is adjacent to~$u$.
The vertex $u$ can be adjacent to at most one vertex of $X$ in the region bounded by $uv_{i}v_{i+1}u$.
Thus there are at most $2 \min\{4,\abs{X}\} \leq 8$ external paths of length three between opposite vertices of the cycle $v_1v_2v_3v_4v_1$ containing a vertex from $X$.  
Also note that every $x \in X$ is in a type-$2$ cycle of $G'$, containing three vertices of $N(v)$ not in consecutive order. 
In particular, if $x$ is adjacent to $v_{2i}$ and $v_{2i+1}$, then $xv_{2i}v_{2i+2}uv_{2i+1}x$ is a five-cycle in $G'$, see Figure~\ref{d(v)=4c}. 
Therefore $\N_2 \leq \abs{Y} - \abs{X} + 8$, and it follows that \[\N(C_5,G)-\N(C_5,G') \leq 26 + \abs{X} + \abs{Y}\leq 20 +n. \]
If $n\geq 11$, then $\N(C_5,G)-\N(C_5,G')<4(n-3)$  and we are done by Equation~\ref{Equation:Induction_second}. From here we suppose $6\leq n\leq 10$.

If there is precisely one vertex $x$ inside the region $v_iv_{i+1}u$ for some $i\in[4]$, then this vertex $x$ is adjacent to the three vertices $v_i, v_{i+1}, u$. 
Moreover $N(v_i)\cap N(v_{i+1}) \cap N(u)=\{x\}$.  
Consider an auxiliary graph $G''$, which is obtained from $G$ by removing $x$. 
By the second part of Lemma~\ref{triangle} for the graph $G''$ and the face $v_iv_{i+1}u v_i$, 
there are at most $4(n-3)-1$  paths of length~$3$ with both terminal vertices in $v_iv_{i+1}u$. 
Hence we are done since the number of $5$-cycles incident to $v$ in $G$ is equal to the number of paths of length~$3$ with both terminal vertices in $v_iv_{i+1}u$.

If there are precisely two vertices $x,y$ in the region $uv_iv_{i+1}u$, for some $i\in[4]$, then one of these two vertices, say $x$, will have degree~$4$ and it will have the neighborhood $\{v_i,v_{i+1},y,u\}$. 
Note that $G[\{v_i,v_{i+1},y,u\}]$ contains a triangle $v_iv_{i+1}uv_i$.
Since $x$ is a vertex of degree $4$ such that $N(x)$ contains a four-cycle with a diagonal, it satisfies the condition of Case~1.1, and we are done.  

If $n\in \{7,8\}$, then there will be a region $uv_{i}v_{i+1}u$ containing one or two vertices, for some $i\in[4]$, hence we are done. 

If $n\in\{9,10\}$, then all vertices from $V(G)\setminus N[v]\setminus \{u\}$ are in the same region $v_i v_{i+1} u$, for some $i\in[4]$. 
Thus the number of the external paths of length three between opposite vertices of the cycle $v_1v_2v_3v_4v_1$ incident with a vertex of $X$ is $2$ instead of $2 \min\{4,\abs{X}\} \leq 8$.  
Hence we have  $\N_2 \leq \abs{Y} - \abs{X} + 2$.  
It follows that 
\[\N(C_5,G)-\N(C_5,G') \leq 20 + \abs{X} + \abs{Y}\leq 14 +n < 4(n-3),\]
and we are done by Equation~\ref{Equation:Induction_second}.

\begin{figure}[thb]   
\centering

\begin{tikzpicture}
\draw[blue,thick]  (-1,0) -- (0,2)  (1,-1) -- (1,0)   (1,-1) arc(-61.58:118.42:1.1cm and 1.69cm); 
\draw[blue,dashed] (1,0) -- (0,0) -- (-1,0) ;
\draw  (0,1) --(0,2)  (0,2) -- (1,0);
\draw (1,0) node[right]{$v_3$}  (0,0) node[below]{$v$}  (0,1) node[right]{$v_2$}   (-1,0) node[left]{$v_1$}  (0,0)  (0,-1)node[below]{$v_4$}    (1,0) -- (0,1)    (-1,0) -- (0,-1) (1,-1) node[left]{$y$};
\filldraw (1,0) circle (2pt)  (0,0) circle (2pt)  (0,1) circle (2pt)  (-1,0) circle (2pt)  (0,-1) circle (2pt);
\filldraw (0,2) node[above]{$u$} circle (2pt) ;
\filldraw (0,-1.9) ;
\draw  (0,-1) -- (1,0)  (0,1) -- (-1,0)  (0,2) arc (90:270:1.8cm and 1.5cm);   
\filldraw (1,-1) circle (2pt);
\end{tikzpicture}\quad
\begin{tikzpicture}

\draw[red,thick]  (0,1) --(0,2) (0,-1)--(1,-1) (0,2) -- (1,0) (1,-1) -- (1,0);
\draw[red,dotted,thick] (0,-1) arc(270:90:0.2cm and 1cm) ;
\draw (1,0) node[right]{$v_3$}  (0,0) node[below]{$v$}  (0,1) node[right]{$v_2$}   (-1,0) node[left]{$v_1$}  (0,0)  (0,-1)node[below]{$v_4$}    (1,0) -- (0,1)    (-1,0) -- (0,-1);
\filldraw (1,0) circle (2pt)  (0,0) circle (2pt)  (0,1) circle (2pt)  (-1,0) circle (2pt)  (0,-1) circle (2pt);
\filldraw (0,2) node[above]{$u$} circle (2pt) ;
\filldraw (0,-1.9) ;
\draw  (0,-1) -- (1,0)  (0,1) -- (-1,0)  (0,2) arc (90:270:1.8cm and 1.5cm);   
\filldraw (1,-1) circle (2pt) node[right]{$x$};
\draw  (-1,0) -- (0,2) ; 
\end{tikzpicture}

\caption{The first picture shows a type-$3$ cycle in $G'$ in Case~1.1 when $u,w$ 
The last two pictures show a type-$2$ cycle in Case~1.3. There is one such cycle for every common neighbor of $u$ and $v_i$ not in $N(v)$. The last picture shows a type-$2$ cycle from Case~1.3 which occurs in $G'$ for each $x\in X$, without using the edge $\{u,x\}$.}
\label{d(v)=4c}
\end{figure}

\textbf{Case 2.} The graph $G$ has no vertex of degree $4$ but $G$ has a vertex $v$ of degree $3$. 

Let $\{v_1,v_2,v_3\}$ be the neighborhood of $v$. 
Then $v_1v_2v_3v_1$ is a triangle, and we will assume $v$ is in the interior of the triangle of the plane graph $G$. 
Let $G'$ be the graph induced by $G[V(G) \setminus \{v\}]$.
The number of $5$-cycles in $G$ containing $v$ is precisely the number of paths of length~$3$ with end vertices in~$N(v)$. 
Note that, the vertices of $N(v)$ are the vertices of a triangular face in $G'$, so we may apply Lemma~\ref{triangle} for the face $G'[N(v)]$ in $G'$.
We now consider two subcases depending on whether or not there exists a vertex distinct from $v$ adjacent to all three vertices of $N(v)$ in $G$.

\textbf{Case 2.1} There is no vertex $u \neq v$ adjacent to the vertices $v_1$, $v_2$ and $v_3$. 

By Lemma~\ref{triangle} there are at most $4(n-3)-1$ cycles of length five containing $v$. 
Thus for $n\neq 8$ we have  $\N(C_5,G)<2n^2-10n+12$ by induction. 
For $n=8$ we have $\N(C_5,G) \leq 2n^2-10n+12$.
Equality may hold only if $G'$ is isomorphic to $D_7$. 
Hence if equality holds, then $G$ is obtained from $D_7$ by adding a vertex in a triangular face; it is straightforward to check that all such graphs contain at most $57<g(8)$ cycles of length~$5$.

\textbf{Case 2.2} There is a vertex $u \neq v$, which is adjacent to the three vertices of~$N(v)$.

Since there is no vertex of degree~$4$, the degrees of the vertices $v_1$, $v_2$ and $v_3$ are at least five. 
Hence at least two of the following three bounded regions of $G'$ contain a vertex:  $uv_1v_2u$, $uv_2v_3u$, $uv_3v_1u$. 
Note that for $n=6$ there is no such graph. 
For $n=7$ there is the unique planar graph with these properties, namely the graph on the left in Figure~\ref{exc0}.
It contains~$36$ cycles of length~$5$, and we are done. 
For $n=8$ there is also a unique planar graph with these properties, namely the graph on the right in Figure~\ref{exc0}. 
This graph is isomorphic to $A_8$, and it contains $60$ cycles of length~$5$. 
From here on we assume that $n\geq 9$. 

\begin{figure}[b]

\centering
\begin{tikzpicture}[scale=.5 ] 
\filldraw (0,0) circle (5pt) node[left]{$v$} -- (30:1cm)  circle (5pt) -- (150:1cm) circle (5pt)  -- (270:1cm) circle (5pt)  (0,0) -- (150:1cm) (270:1cm)--(30:1cm) (270:1cm) -- (0,0) ;
\filldraw   (270:1cm) -- (210:2cm) circle (5pt)  --  (150:1cm);
\filldraw   (30:1cm) -- (330:2cm) circle (5pt)  --  (270:1cm);
\draw (270:1cm) arc(230:490: 2.2cm and 2.6cm);
\draw[shift= {(330:2cm)},rotate=-68] (0,0) arc (0:180: 2.18cm and 1.2cm);
\draw[shift= {(210:2cm)},rotate=-112] (0,0) arc (0:-180: 2.18cm and 1.2cm);
\filldraw   (30:1cm) -- (90:3cm) node[above]{$u$} circle (5pt)  --  (150:1cm);
\end{tikzpicture}\qquad
\begin{tikzpicture}[scale=.5 ] 
\filldraw (0,0) node[left]{$v$} circle (5pt)  -- (30:1cm)  circle (5pt) -- (150:1cm) circle (5pt)  -- (270:1cm) circle (5pt)  (0,0) -- (150:1cm) (270:1cm)--(30:1cm) (270:1cm) -- (0,0);
\filldraw   (270:1cm) -- (210:2cm) circle (5pt)  --  (150:1cm);
\filldraw   (30:1cm) -- (330:2cm) circle (5pt)  --  (270:1cm);
\filldraw    (30:1cm) --  (90:2cm) circle (5pt) -- (150:1cm) ;
\draw (270:1cm) arc(230:490: 2.2cm and 2.6cm);

\draw[shift= {(330:2cm)},rotate=-68] (0,0) arc (0:180: 2.18cm and 1.2cm);
\draw[shift= {(210:2cm)},rotate=-112] (0,0) arc (0:-180: 2.18cm and 1.2cm);

\filldraw   (30:1cm) -- (90:3cm) circle (5pt)  --  (150:1cm);
\filldraw   (90:2cm)  --(90:3cm) node[above]{$u$};

\end{tikzpicture}

\caption{The unique planar graphs with $7$ and $8$ vertices which have minimum degree three and no vertex of degree four.} 
\label{exc0}
\end{figure}

Since $G$ contains no vertex of degree~$4$, it follows that $G'$ contains at most $3$ vertices of degree four.
Hence $G'$ is not isomorphic to $D_{n-1}$. 
Since each face of the graphs $A_8$ and  $A_{11}$ contains a vertex of degree three and $G$ contains no vertex of degree four, it follows that $G'$ is not isomorphic to neither the graph $A_8$ nor $A_{11}$.
Thus we have $\N(C_5,G') \leq (n-1)^2 - 10(n-1) +11$, and  to finish the proof of this case it is enough to show that $v$ is in at most $4(n-3)$ cycles of length~$5$, and we are done by Equation~\ref{Equation:Induction_second}.

If there is at least one vertex that is not adjacent to any vertex in $N(v)$, then such a vertex does not appear in a $4$-vertex path with terminal vertices in $N(v)$. 
Thus, by Lemma~\ref{triangle}, $v$ is in at most $4(n-3)$  cycles of length~$5$ and we are done by Equation~\ref{Equation:Induction_second}. Thus, we may assume that every vertex is adjacent to at least one vertex of $N(v)$.
Let \[X= \{x\in V(G) \setminus N(v): \abs{N(x)\cap N(v)} = 2\}\] and \[Y = \{y\in V(G):\abs{N(y)\cap N(v)} = 1\}.\]

Note that every vertex of $Y$ is in one of the bounded triangular regions of $G'$: $uv_1v_2u$, $uv_2v_3u$, $uv_3v_1u$. Furthermore, it is adjacent to at most two vertices of $X \cup \{u\}$, since $G$ is  $K_{3,3}$-free. 
Even more, if $y \in Y$ and  $N(y) \cap N(v) = \{v_i\}$, then we may suppose that every neighbor of $y$ is also a neighbor of $v_i$.
Otherwise, if there is a  vertex $y'$ adjacent to $y$ and not adjacent to $v_i$, then the 
graph $G''$ obtained from $G'$ by contracting the edge $\{y,w\}$ contains one more path of length three with terminal vertices in $N(v)$ than $G'$ does (this was shown in the proof of Lemma~\ref{triangle}).
By  Lemma~\ref{triangle} the number of paths of length three in $G''$ with terminal vertices in $N(v)$ is at most $4(n-3)$. 
Hence the number of paths of length three in $G'$ with terminal vertices in $N(v)$ is at most $4(n-3)-1$, and we are done by Equation~\ref{Equation:Induction_second}.
Thus, we will assume for every $y \in Y$ such that  $N(y) \cap N(v) = \{v_i\}$, every neighbour of $y$ is also adjacent to the vertex $v_i$. 
Thus every path of length three in $G'$ with terminal vertices in $N(v)$ and containing $y$ also contains a vertex from $X$. 
Hence each vertex of $Y$ is in at most three paths of length~$3$ with terminal vertices in $N(v)$. Even more, this bound is sharp only if $y$ is adjacent to $u$ and a vertex of $X$. Otherwise, $y$ is in at most two  paths of length~$3$ with terminal vertices in $N(v)$. 

Let us apply Lemma~\ref{triangle} for the graph  $G[V(G')\setminus Y]$ and face $v_1v_2v_3v_1$.
This graph contains at most  $4(n-1-\abs{Y})$ paths of length three with terminal vertices on the face. 
Hence there are at most $4(n-2-\abs{Y})+3\abs{Y}=4(n-3)-(\abs{Y}-4)$ paths of length~$3$ with terminal vertices in $N(v)$.
Thus if 
$\abs{Y}\geq 4$, we are done by induction. 
We  distinguish two further subcases based on whether or not at least one of the regions $uv_iv_{i+1}u$,  $i\in \{1,2,3\}$ contains no vertex.

\textbf{Case 2.2.1}
One of these regions contains no vertex and assume without loss of generality that this region is $uv_1v_2u$. 

By Lemma~\ref{triangle} the number of paths of length three with terminal vertices on the face $v_1v_2v_3v_1$ in the graph obtained from $G[V(G')\setminus Y]$ by adding a vertex $w$ of degree three inside the face $uv_1v_2u$ is at most $4(n-1-\abs{Y})$. 
Note that the number of paths of length three containing $w$ with terminal vertices on the face $v_1v_2v_3v_1$ is six.
Hence we obtain a new improved upper bound $4(n-1-\abs{Y})+3\abs{Y}-6=4(n-3)+2-\abs{Y}$ on the number of paths of length three with terminal vertices on the face $v_1v_2v_3v_1$ in $G'$.
Hence we may assume that $\abs{Y}\leq 1$, or we are done by induction.
Moreover, if $\abs{Y} = 1$, then the vertex of $Y$ must be adjacent to $u$, since otherwise, the number of paths of length three with terminal vertices in $N(v)$ is at most two.  
The number of paths of length three with terminal vertices on the face $v_1v_2v_3v_1$ in $G'$ is at most $4(n-3)$, and we are done by Equation~\ref{Equation:Induction_second}.

Let for each   $i,j \in \{1, 2, 3\}$ such that $i<j$ let $X_{i,j}$ be the set of vertices from $X$ incident with $v_i$ and $v_j$. 
Then $X_{i,j}$ together with $u$ induces a path forest by Lemma~\ref{pforest}.
Since $G'$ is a triangulation, $\abs{Y}\leq 1$ and $V(G')=X\cup Y\cup \{v_1,v_2,v_3\}$ it follows that $X_{i,j}$ together with $u$ induces a path.
Since the region $uv_1v_2u$ is empty, $G'[X\cup\{u\}]$ is a path. 
Each non-terminal vertex of the path in $G'[X\cup\{u\}]$ distinct from $U$ has degree $4$ in $G'[V(G)\setminus Y]$. 
The number of such vertices is at least $n-8$.
Recall $G'$ has no vertex of degree $4$, a vertex from $Y$ is adjacent to at most one vertex from $X$ and $\abs{Y}\leq 1$, hence $n-8\leq 1$. 
Thus if $n\geq 10$ then we are done. 

For $n = 9$ we have  $Y \neq \emptyset$, since otherwise the path $G'[X\cup\{u\}]$ contains four vertices and $G'$ contains a vertex of degree $4$, a contradiction. 
There exist two different planar graphs with such properties, see the two graphs on the left in Figure~\ref{exc}. 
It can be checked that those graphs contain $79$ and $80$ cycles of length~$5$, respectively.
Hence we are done since $g(9)=84$.

\begin{figure}[t]
\centering
\begin{tikzpicture}[scale=.5 ]
\filldraw (0,-.2) circle (5pt) node[right]{$x_2$} -- (30:1cm)  circle (5pt)  -- (150:1cm) circle (5pt)  -- (270:1cm) circle (5pt)  (0,-.2)  -- (150:1cm) (270:1cm)node[below]{$x_3$}--(30:1cm) (270:1cm) -- (0,-.2);
\filldraw   (270:1cm) -- (210:2cm) node[left]{$y$} circle (5pt)  --  (150:1cm);
\filldraw   (90:5cm) circle (5pt)node[above]{$u$};
\draw[shift= {(30:1cm)},rotate=-80] (0,-.2) arc (0:180: 2.3cm and 1cm);
\draw[shift= {(150:1cm)},rotate=-100] (0,-.2) arc (0:-180: 2.3cm and 1cm);
\draw (270:1cm) arc(240:480: 2.2cm and 3.5cm);
\draw[shift= {(210:2cm)},rotate=-106] (0,-.2) arc (0:-180: 3.13cm and 1.2cm);

\filldraw   (90:5cm)  --(90:3cm);

\filldraw (90:3cm) -- (70:3cm) node[right]{$x_1$} circle (5pt)  -- (90:5cm)  (70:3cm) -- (30:1cm);

\filldraw   (30:1cm) -- (90:1.5cm) node[right]{$v$} circle (5pt)  --  (150:1cm);
\filldraw   (30:1cm) -- (90:3cm) circle (5pt)  --  (150:1cm) (90:1.5cm)--(90:3cm);

\end{tikzpicture}
\begin{tikzpicture}[scale=.5 ]
\filldraw (0,-.2) circle (5pt) node[right]{$x_2$} -- (30:1cm)  circle (5pt)  -- (150:1cm) circle (5pt)  -- (270:1cm) circle (5pt)  (0,-.2)  -- (150:1cm) (270:1cm)node[below]{$x_3$}--(30:1cm) (270:1cm) -- (0,-.2);
\filldraw   (270:1cm) -- (210:2cm) node[left]{$y$} circle (5pt)  --  (150:1cm);
\filldraw   (90:5cm) circle (5pt)node[above]{$u$};
\draw[shift= {(30:1cm)},rotate=-80] (0,0) arc (0:180: 2.3cm and 1cm);
\draw[shift= {(150:1cm)},rotate=-100] (0,0) arc (0:-180: 2.3cm and 1cm);
\draw (270:1cm) arc(240:480: 2.2cm and 3.5cm);
\draw[shift= {(210:2cm)},rotate=-106] (0,0) arc (0:-180: 3.13cm and 1.2cm);

\filldraw   (90:5cm)  --(90:3cm);

\filldraw (90:3cm) -- (110:3cm) circle (5pt) node[below]{$x_1$} -- (90:5cm)  (110:3cm) -- (150:1cm);

\filldraw   (30:1cm) -- (90:1.5cm) node[right]{$v$} circle (5pt)  --  (150:1cm);
\filldraw   (30:1cm) -- (90:3cm) circle (5pt)  --  (150:1cm) (90:1.5cm)--(90:3cm);

\end{tikzpicture}
\begin{tikzpicture}[scale=.5 ] 
\filldraw (0,0)  node[right]{$v$} circle (5pt)  -- (30:1cm)  circle (5pt) -- (150:1cm) circle (5pt)  -- (270:1cm) circle (5pt)  (0,0) -- (150:1cm) (270:1cm)--(30:1cm) (270:1cm) -- (0,0);
\filldraw   (270:1cm) -- (210:2cm) circle (5pt)  --  (150:1cm);
\filldraw   (30:1cm) -- (330:2cm) circle (5pt)  --  (270:1cm);
\filldraw   (90:5cm) circle (5pt);
\draw[shift= {(30:1cm)},rotate=-80] (0,0) arc (0:180: 2.3cm and 1cm);
\draw[shift= {(150:1cm)},rotate=-100] (0,0) arc (0:-180: 2.3cm and 1cm);
\draw (270:1cm) arc(240:480: 2.2cm and 3.5cm);
\draw[shift= {(330:2cm)},rotate=-74] (0,0) arc (0:180: 3.13cm and 1.2cm);
\draw[shift= {(210:2cm)},rotate=-106] (0,0) arc (0:-180: 3.13cm and 1.2cm);

\filldraw   (30:1cm) -- (90:3cm) circle (5pt)  --  (150:1cm);
\filldraw   (90:5cm)  --(90:3cm);

\filldraw (90:3cm) -- (70:3cm) circle (5pt)  -- (90:5cm)  (70:3cm) -- (30:1cm);
\filldraw (90:3cm) -- (110:3cm) circle (5pt) -- (90:5cm) (110:3cm) -- (150:1cm);

\end{tikzpicture}
\begin{tikzpicture}[scale=.5 ]
\filldraw (0,0) node[right]{$v$} circle (5pt)  -- (30:1cm)  circle (5pt) -- (150:1cm) circle (5pt)  -- (270:1cm) circle (5pt)  (0,0) -- (150:1cm) (270:1cm)--(30:1cm) (270:1cm) -- (0,0);
\filldraw   (270:1cm) -- (210:2cm) circle (5pt)  --  (150:1cm);
\filldraw   (30:1cm) -- (330:2cm) circle (5pt)  --  (270:1cm);
\filldraw   (90:5cm) circle (5pt);
\draw[shift= {(30:1cm)},rotate=-80] (0,0) arc (0:180: 2.3cm and 1cm);
\draw[shift= {(150:1cm)},rotate=-100] (0,0) arc (0:-180: 2.3cm and 1cm);
\draw (270:1cm) arc(240:480: 2.2cm and 3.5cm);
\draw[shift= {(330:2cm)},rotate=-74] (0,0) arc (0:180: 3.13cm and 1.2cm);
\draw[shift= {(210:2cm)},rotate=-106] (0,0) arc (0:-180: 3.13cm and 1.2cm);

\filldraw   (90:5cm)  --(90:3cm);

\filldraw (90:3cm) -- (70:3cm) circle (5pt)  -- (90:5cm)  (70:3cm) -- (30:1cm);
\filldraw (90:3cm) -- (110:3cm) circle (5pt) -- (90:5cm) (110:3cm) -- (150:1cm);

\filldraw   (30:1cm) -- (90:1.5cm) circle (5pt)  --  (150:1cm);
\filldraw   (30:1cm) -- (90:3cm) circle (5pt)  --  (150:1cm) (90:1.5cm)--(90:3cm);

\end{tikzpicture}

\caption{The two graphs on the left are nine-vertex graphs from Case 2.2.1. 
The two graphs on the right are ten- and eleven-vertex graphs from Case 2.2.2. } 
\label{exc}
\end{figure}

\textbf{Case 2.2.2}
For each $i,j \in\{1,2,3\}$, $i\neq j$ there is at least one vertex in the region $uv_iv_ju$. 

Note that $A_8$ is a subgraph of $G$ in this case.
Thus $n\neq 9$, since there is no vertex of degree $4$ in $G$.
For  $n\in \{10,11\}$ there is a unique graph containing $A_8$ as a subgraph and no vertex of degree four, see the two graphs on the right in Figure~\ref{exc}. 
These graphs contain $110$ and $144$ cycles of length five, respectively.
Note that for $g(11)= 144$ the graph on the right in Figure~\ref{exc} is $A_{11}$, which is another extremal construction distinct from $D_{11}$. 

If $n \geq 12$, then $\abs{X} \geq 4$ since $\abs{Y}\leq 3$. 
By the pigeonhole principle there exists an $i\in \{1,2,3\}$ such that in the region bounded by $uv_iv_{i+1}u$ there are at least two vertices of $X$.
Without loss of generality, we may assume $i=1$.  
Let $x_1\in X$ be the vertex such that $x_1v_1v_2x_1$ is a face, and let $x_2$ be its neighbor from $X$.

Since there is no vertex of degree $4$ in $G'$, either $x_1$ is not adjacent to a vertex from $Y$ or $x_1$ is adjacent to at least two such vertices. 
If $x_1$ is adjacent to two vertices in $Y$, neither of those vertices is adjacent to the vertex $u$. 
Hence both of them are in at most two paths of length three with terminal vertices on the face $v_1v_2v_3v_1$. 
It follows that the number of paths of length three with terminal vertices on the face $v_1v_2v_3v_1$ is at most $4((n-3)-1)+4$ by Lemma~\ref{triangle}, and we are done by induction.
Hence we may assume the neighborhood of $x_1$ is $\{v_1,v_2,x_2\}$. 
Since $G'$ is trianglulated there is a vertex from $X\cup \{u\}$ adjacent to  $v_1$, $v_2$ and $x_2$.
Let $x_3$ be such a vertex.

Consider the vertex $x_1$ of degree $3$ with neighborhood $\{v_1,v_2,x_2\}$. 
Either we are done by Case 2.2.1 for the vertex $x_1$, or both of the bounded regions $v_1x_2x_3v_1$ and $v_2x_2x_3v_2$  contain a vertex. 
Note that in the case these regions contain a vertex, they contain a vertex from $Y$. 
Since the size of $Y$ is at most $3$, either we are done or the bounded regions $v_1v_3uv_1$ and $v_3v_2uv_3$ contain at most one vertex from $X$. 
Hence the number of vertices in $G$ is at most $\abs{X}+\abs{Y}+5\leq 4+3+5=12$. 
Thus if $n>12$ we are done. 
If $n=12$ then $\abs{X}=4$, then $\abs{Y}=3$ and $G$ is obtained from the first graph from right on Figure~\ref{Fig:2b**} by adding a vertex from $Y$, a contradiction since~$G$ has no vertex of degree~$4$. 

\begin{figure}[bt]
\centering
\begin{tikzpicture}[scale=.5 ]
\filldraw (0,0) node[right]{$x_1$} circle (5pt)  -- (30:1cm) node[right]{$v_2$} circle (5pt) -- (150:1cm) node[left]{$v_1$} circle (5pt)  -- (270:1cm) node[below]{$x_2$} circle (5pt)  (0,0) -- (150:1cm) (270:1cm)--(30:1cm) (270:1cm) -- (0,0);
\filldraw   (270:1cm) -- (210:2cm) node[below]{$y_2$} circle (5pt)  --  (150:1cm);
\filldraw   (30:1cm) --  (330:2cm) node[below]{$y_1$} circle (5pt)  --  (270:1cm);
\filldraw   (90:5cm) node[above]{$u$} circle (5pt);
\draw[shift= {(30:1cm)},rotate=-80] (0,0) arc (0:180: 2.3cm and 1cm);
\draw[shift= {(150:1cm)},rotate=-100]   (0,0) arc (0:-180: 2.3cm and 1cm);
\draw (270:1cm) arc(240:480: 2.2cm and 3.5cm);
\draw[shift= {(330:2cm)},rotate=-74] (0,0) arc (0:180: 3.13cm and 1.2cm);
\draw[shift= {(210:2cm)},rotate=-106] (0,0) arc (0:-180: 3.13cm and 1.2cm);

\filldraw   (90:5cm)  -- node[below]{$v_3$} (90:3cm);

\filldraw (90:3cm) -- (70:3cm) node[below]{$x_4$} circle (5pt)  -- (90:5cm)  (70:3cm) --  (30:1cm);
\filldraw (90:3cm) --  (110:3cm)  node[below]{$x_3$} circle (5pt) --  (90:5cm) (110:3cm) --  (150:1cm);

\filldraw    (30:1cm) -- (90:1.5cm) circle (5pt)  --  (150:1cm);
\filldraw   (30:1cm) -- node[below]{$v$} (90:3cm) circle (5pt)  --  (150:1cm) (90:1.5cm)--(90:3cm);

\end{tikzpicture}

\caption{The eleven vertex subgraph of $G$ from Case 2.2.2.} 
\label{Fig:2b**}
\end{figure}

\textbf{Case 3.}
There is no vertex of degree $3$ or $4$, i.e.,the minimum degree of $G$ is $5$.

Note that this implies that $n\geq 12$.
Let $v$ be a vertex of degree~$5$, and let $v_1,v_2,\dots,v_5$ be the neighbors of $v$ arranged such that $v_1v_2v_3v_4v_5v_1$ is a cycle. 
Since $G$ is planar there exists an $i\in [5]$ such that  $\{v_i,v_{i-2}\}\notin E(G)$ and $\{v_i,v_{i+2}\}\notin E(G)$.
Without loss of generality, we assume $i=1$. 
Moreover, since $G$ is $K_5$-minor-free either $\{v_2,v_4\}$ or $\{v_3,v_5\}$ is not an edge of $G$. 
We may assume, without loss of generality that $\{v_3,v_5\}$ is not an edge in $G$.
Consider the graph $G'$ obtained from $G$ by removing the vertex $v$ with all edges incident to it and adding the edges $\{v_1,v_3\}$ and $\{v_1,v_4\}$.

As in Case 1, we say that a $5$-cycle in $G$ containing $v$ is \emph{type-$k$}, for $k=0,1,2$, if it contains $v$ and precisely $k$ vertices not in $N[v]$, and we say that a five-cycle in $G'$ is \emph{type-$k$}, for $k=0,1,2$, if it contains at least one of the edges $\{v_1,v_3\}$ or $\{v_1,v_4\}$ and precisely $k$ vertices not in $N(v)$. 
Let $\N_k$ be the number of type-$k$ cycles in $G$ minus the number of type-$k$ cycles in $G'$. 
We say that a path is \emph{internal} if all of its vertices are in $N[v]$, and we say that a path is \emph{external} if all of its non-terminal vertices are outside $N[v]$. As before $\N(C_5,G)-\N(C_5,G') = \N_0 + \N_1 + \N_2 + \N_3 \leq \N_0 + \N_1 + \N_2.$

We have
\[
   \N_1= \sum_{1\leq i < j \leq 5}h(i,j)\abs{(N(v_i)\cap N(v_j)\setminus N[v]},
\]
where $h(i,j)$ is the number of internal paths of length $3$ from $v_i$ to $v_j$ which contain $v$ in $G$ minus the number of  paths of length $3$ from $v_i$ to $v_j$ containing at least one of the edges $\{v_1,v_3\}$ or $\{v_1,v_4\}$ in $G'$. 
Note that $G[N[v]]$ contains the five-cycle $v_1v_2v_3v_4v_5v_1$ and may contain the edges $\{v_2,v_4\}$ or $\{v_2,v_5\}$ the other edges are  not present in  $G[N[v]]$.

Here we consider internal paths of length $3$ from $v_i$ to $v_j$ incident with $v$ in $G$ containing neither the edge  $\{v_2,v_4\}$ nor  $\{v_2,v_5\}$.  If $j=i+1$ then there are two such paths: $v_iv v_{i+2}v_{i+1} $ and $v_i v_{i-1}vv_{i+1}$. 
While for $j=i+2$  there are four such paths:  $v_{i+2}vv_{i-1}v_i, v_{i+2}vv_{i+1}v_i, v_{i+2}v_{i+1}vv_i$ and $v_{i+2}v_{i+3}vv_i$. 

Now we list the internal paths of length $3$ from $v_i$ to $v_j$ in $G'$ containing at least one of the edges $\{v_1,v_3\}$, $\{v_1,v_4\}$ and containing neither the edge  $\{v_2,v_4\}$ nor  $\{v_2,v_5\}$:  $v_1v_4v_3v_2$, $v_1v_3v_4v_5$, $v_3v_1v_5v_4$, $v_3v_2v_1v_4$, $v_2v_1v_4v_3$, $v_5v_1v_3v_4$, $v_2v_1v_3v_4$, $v_2v_3v_1v_4$, $v_5v_1v_4v_3$, $v_5v_4v_1v_3$, $v_2v_1v_4v_5$, $v_2v_3v_1v_5$, $v_1v_3v_4v_5$. 

Next we consider the internal paths of length $3$ from $v_i$ to $v_j$, $i,j\in [5]$, incident with $v$ in $G$  containing either $\{v_2,v_4\}$ or  $\{v_2,v_5\}$.
First note that such paths contain exactly one of the edges $\{v_2,v_4\}$ or  $\{v_2,v_5\}$. In what follows we list all such paths and transform them, with the exception of $v_5 v v_2 v_4$, to internal paths of length $3$ from $v_i$ to $v_j$  in $G'$ containing at least one of the edges $\{v_1,v_3\}$, $\{v_1,v_4\}$ as follows
(see the first two graphs on Figure~\ref{d=5}). 
\begin{align*}
    v_1 v v_2v_4\rightarrow v_1 v_3 v_2 v_4,    \\v_3 v v_2v_4\rightarrow v_3 v_1 v_2 v_4,
\\v_1 v v_4 v_2\rightarrow v_1 v_3 v_4 v_2,
\\v_3 v v_4 v_2\rightarrow v_3 v_1 v_4 v_2,
\\v_5 v v_4 v_2\rightarrow v_5 v_1 v_4 v_2,
\\v_1 v v_2 v_5\rightarrow v_1 v_3 v_2 v_5,
\\v_3 v v_2 v_5\rightarrow v_3 v_1 v_2 v_5,
\\v_4 v v_2 v_5\rightarrow v_4 v_1 v_2 v_5,
\\v_1 v v_5v_2\rightarrow v_1 v_4 v_5v_2,
\\v_3 v v_5v_2 \rightarrow v_3 v_1 v_5v_2,
\\ v_4 v v_5v_2 \rightarrow v_4 v_1 v_5v_2.
\end{align*}

Note that for each path the corresponding path has the same terminal vertices. Even more each edge $\{v_2,v_4\}$ and  $\{v_2,v_5\}$ is either in both the internal path in $G$ and the corresponding internal path in $G'$ or neither. 
Finally, we can determine the function $h(i,j)$ (recall that  $h(i,j)$ is the number of internal paths of length $3$ from $v_i$ to $v_j$ which contain $v$ in $G$ minus the number of  paths of length $3$ from $v_i$ to $v_j$ containing at least one of the edges $\{v_1,v_3\}$ or $\{v_1,v_4\}$ in $G'$).
Note that  $h(4,5) = 1$ if $\{v_2,v_4\}$ is not an edge of $G$, and $h(4,5) = 2$ otherwise.  The values of $h(i,j)$ are recorded in Figure~\ref{chart}.
\begin{figure}[h]   
\centering
$\begin{array}{|c|c|c|c|c|c|c|c|c|c|c|}
\hline\{i,j\} & \{1,2\} &  \{1,3\} & \{1,4\} & \{1,5\} & \{2,3\} & \{2,4\} & \{2,5\} & \{3,4\} & \{3,5\} & \{4,5\} \\
\hline
h(i,j) & 1 & 4 & 4 & 1 & 1 & 2 & 2 & 0 & 2 & 1 ~ \mbox{or} ~ 2\\
\hline
\end{array}$
\caption{A table of the values for $h(i,j)$ for all possible pairs $(i,j)$.}
\label{chart}
\end{figure}

Let us consider the type-$2$ five-cycles. 
Note that for each path of length $3$ $v_ixyv_j$ such that $i\neq j$, $x\neq y$ and  $x,y \not \in N[v]$, there is exactly one type-$2$ five-cycle containing the path and $v$, namely $vv_ixyv_jv$.
While in $G'$ we have that for $\{i,j\} \not = \{2,5\}$, there is  one type-$2$  five-cycle using at least one of the edges $\{v_1,v_3\}$ or  $\{v_1,v_4\}$. 
The cycles are the following: if $1 \not \in \{i,j\}$ and $\{i,j\} \not = \{2,5\}$ the type-$2$ $5$-cycle is $v_ixyv_jv_1v_i$ which contains at least one of the edges $\{v_1,v_3\}$ or  $\{v_1,v_4\}$ (see Figure~\ref{d=5}). 
If $i = 1$ and $j \in \{2,4\}$ the 5-cycle $v_1xyv_jv_3v_1$ contains the edge $\{v_3,v_1\}$. 
Finally if $i=1$ and $j\in \{3,5\}$ the 5-cycle $v_1xyv_jv_4v_1$ contains the edge $\{v_1,v_4\}$.

Let us now consider type-$0$ $5$-cycles. 
Observe that there are five type-$0$ five-cycles in $G$ if  $G$  contains neither of the edges $\{v_2,v_5\}$ and $\{v_2,v_4\}$.
There are ten type-$0$ five-cycles in $G$ when $G$ contains exactly one of the edges $\{v_2,v_5\}$ or $\{v_2,v_4\}$.
There are seventeen type-$0$ five-cycles in $G$ if  $G$ contains both of the edges $\{v_2,v_5\}$ and $\{v_2,v_4\}$.
 On the other hand, in $G'$ the number of type-$0$ five-cycles is zero if  $G$ does not contain the edges $\{v_2,v_5\}$ and $\{v_2,v_4\}$. 
There is one  type-$0$ $5$-cycle in $G'$ if $\{v_4,v_2\}$ is an edge and three  type-$0$ $5$-cycles in $G'$ if $\{v_2,v_5\}$ is an edge.  There are five type-$0$ $5$-cycles in $G'$ if  $G$ contains  the edges $\{v_2,v_5\}$ and $\{v_2,v_4\}$. 
 Hence we have 
 \[\N_0 \in \{5,9,7,12\}.\] 

\begin{figure}[t]
\centering
\begin{tikzpicture}
\draw[red,dashed] (306:1cm)--(90:1cm);
\draw[blue] (18:1cm) arc(0:180: .96cm and 1.2cm);
\draw[blue] (-54:1cm) --  (-18:2cm) -- (18:1cm);
\draw[blue,dashed] (162:1cm) -- (0,0) -- (306:1cm);
\draw[red] (162:1cm) -- (90:1cm);

\filldraw (-18:2cm) circle (2pt);

\filldraw (90:1cm) circle (2pt) node[above]{$v_1$}  (162:1cm) circle (2pt) node[left]{$v_2$}  -- (234:1cm) circle (2pt) node[left]{$v_3$} -- (306:1cm) circle (2pt) node[below]{$v_4$} -- (18:1cm) circle (2pt) node[right]{$v_5$} -- (90:1cm);

\filldraw (0,0) circle (2pt) node[below]{$v$};
\end{tikzpicture}\quad
\begin{tikzpicture}

\draw[blue,thick]  (162:1cm) -- (-2,.3) -- (-1.8,-.6) -- (234:1cm) (18:1cm) -- (.5,1.5) --  (-.5,1.5) -- (162:1cm) ;
\draw[blue,dashed] (162:1cm) -- (0,0) -- (234:1cm)  (0,0) -- (18:1cm);
\draw[red,thick] (90:1cm)  -- (162:1cm) circle (2pt);
\draw[red,dashed] (90:1cm)--(234:1cm);
\node at (-1.1,.6) {$v_2$};
\filldraw (90:1cm) circle (2pt) node[above]{$v_1$}  (162:1cm) circle (2pt)  -- (234:1cm) circle (2pt) node[below]{$v_3$} -- (306:1cm) circle (2pt) node[right]{$v_4$} -- (18:1cm) circle (2pt) node[right]{$v_5$} -- (90:1cm);

\filldraw (-2,.3) circle (2pt)  (-1.8,-.6) circle (2pt) (0,0) circle (2pt) node[below]{$v$} (.5,1.5) circle (2pt)  (-.5,1.5) circle (2pt);
\end{tikzpicture}\quad
\caption{In the first graph we have a type-$1$ five-cycle in $G$ using the edge $\{v_2,v_5\}$ and the corresponding cycle in $G'$.  The second graph pictured represents the type-$2$ five-cycles in $G$ and in $G'$.} 
\label{d=5}
\end{figure}

\textbf{Case 3.1.} There is a vertex $u \not = v$, which is adjacent to every vertex of $N(v)$.

Note that in this case, $G$ does not contain the edges  $\{v_2,v_5\}$ and $\{v_2,v_4\}$, thus $\N_0 = 5$. 
For any two consecutive regions,   $uv_{i-1}v_iu$ and $uv_iv_{i+1}u$, one of them must contain a vertex in its interior, since the degree of  $v_i$ is at least five.
It follows that at least three of the regions $uv_iv_{i+1}u$ contain a vertex in their interior. 
However since the minimum degree of the graph is $5$, it follows that each such region must contain at least three vertices (since the vertex in the interior has at most $3$ edges to the vertices of the triangle $uv_iv_{i+1}u$), therefore $n \geq 7+3\cdot 3 = 16$.

We have that $u$ contributes to $18$ type-$1$ five-cycles, and every other vertex is adjacent to at most two vertices of $N(v)$, which must be consecutive since $G$ is planar. 
In particular any other vertex contributes to at most two type-$1$ five-cycles, so $\N_1 \leq 18 + 2(n-7)$.

Finally, we have that each of the vertices from $V(G)\setminus (N[v]\cup \{u\})$ can be in at most one path of length three from $v_2$ to $v_5$, since each such path must include $u$, so $\N_2 \leq (n-7)$. Hence \[\N(C_5,G) - \N(C_5,G') \leq 23 + 3(n-7)<4(n-3),\] 
since $n\geq 16$, and we are done by Equation~\ref{Equation:Induction_second}.

\textbf{Case 3.2.} There exists a vertex $u \in V(G)$ which is adjacent to precisely four vertices of $N(v)$.

By relabeling the vertices we may assume that $v_3$ is not adjacent to $u$.  
Since 
\[
\sum_{\substack{1\leq i < j \leq 5 \\ i,j\neq 3}} h(i,j)\leq 12,
\]
 the vertex $u$ contributes at most $12$ to the value of  $\N_1$. 

Since $G$ is planar it does not contain the  edge  $\{v_2,v_5\}$, thus $\N_0 \leq 9$. 

Every vertex in $V(G)\setminus (N[v]\cup \{u\})$ is in at most one path of length $3$ from $v_2$ to $v_5$, since it must contain the vertex $u$.  
Thus we have $\N_2 \leq (n-7)$.

Here we split the case into two cases depending on whether there is a vertex $w\neq u$ from $V(G)\setminus N[v]$ adjacent to at least three vertices from $N(v)$ or not. 
If such a vertex exists then it is adjacent to $v_2,v_3$ and $v_4$, and it is unique. 
Since the degree of $v_3$ is at least five, one of the regions $wv_4v_3w$ and  $wv_3v_2w$ is not empty. 
Thus this region must contain at least three vertices.
Even more, one of the regions $uv_4v_5u$, $uv_5v_1u$ and $uv_1v_2u$ contains at least three vertices. 
 Hence the number of vertices is at least $8+3+3=14$.  

Since 
\[
\sum_{\substack{2\leq i < j \leq 4 \\ i,j\neq 3}} h(i,j)=3,
\]
 the vertex $w$ contributes at most $12$ to the value of  $\N_1$. 
Every  vertex from  $V(G)\setminus N[v] \setminus \{w,u\}$ has at most two neighbors in $N(v)$, and since $h(i,j)\leq 2$, they contribute  at most two 
to the value of  $\N_1$.  
Thus $\N_1 \leq  12 + 3 + 2(n-8)$ and so \[\N(C_5,G) - \N(C_5,G') \leq  9+12+3+2(n-8)+(n-7) < 4(n-3),\]
since $n\geq 12$, and we are done by Equation~\ref{Equation:Induction_second}.

If no vertex from  from $V(G)\setminus N[v]$ distinct from  $u$ is adjacent to at least three vertices from $N(v)$, then any vertex $x \in N(v_3)\setminus N[v]$ contributes to at most one type-$1$ five-cycle, since $h(2,3),h(3,4)\leq 1$. 
Since $d(v_3)\geq 5$ and $v_3$ is not adjacent to $v_1$ or $v_5$, there are at least two vertices in $N(v_3)\setminus N[v]$. 
Thus we have that $\N_1 \leq 12+2 + 2(n-9)$, so \[\N(C_5,G) - \N(C_5,G') \leq 9+14+2(n-9) +(n-7)< 4(n-3),\] since $n\geq 12$, and we are done by Equation~\ref{Equation:Induction_second}.

\textbf{Case 3.3} Every vertex in $V(G) \setminus N[v]$ is adjacent to at most $3$ vertices from $N(v)$. 

Recall that $v_1$ was chosen to be a vertex such that $\{v_1,v_3\}$ and  $\{v_1,v_4\}$ are not edges of $G$.
Even more, without loss of generality, we may assume that $v_1$ is a vertex such that there is no external path of length two from $v_1$ to $v_3$ or $v_4$. 
Indeed, if there was such a vertex in $V(G)\setminus N[v]$ adjacent to $v_i$ and $v_{i+2}$, then $v_{i+1}$ is a possible choice for $v_1$, otherwise, $v_1$ satisfies all the desired conditions. From here we consider two cases depending on whether or not $\{v_2,v_4\}$ is an edge in $G$.

If $\{v_2,v_4\}$  is an edge of $G$, then we have that no vertex inside the region $v_2v_3v_4v_2$ can be in an external path of length $3$ from $v_2$ to $v_5$. 
Even more this region must contain at least $3$ vertices since $\delta(G)=5$. 
By Lemma~\ref{2(k-1)} there are at most  $2(n-10)$ external path of length $3$ from $v_2$ to $v_5$.  
As we have seen at the beginning of Case~3, $\N_2$ is equal to the number of external paths of length $3$ from $v_2$ to $v_5$. Hence we have $\N_2\leq 2(n-10)$.
There is at most one vertex distinct from $v$ adjacent to $v_2,v_4$, and $v_5$. 
Such a vertex  contributes $6=h(2,4)+h(2,5)+h(4,5)$ to the value of $\N_1$. 
Also, there is at most one vertex distinct from $v$ which is adjacent to $v_2,v_3$ and $v_4$. 
Such a vertex  contributes $3=h(2,3)+h(2,4)+h(3,4)$ to the value of $\N_1$. 
Every other vertex contributes at most two to the value of  $\N_1$. 
Therefore $\N_1 \leq 9 + 2(n-8)$, and since $\N_0 \leq 12$ we have 
\[\N(C_5,G) - \N(C_5,G') \leq 12+9+2(n-8)+2(n-10) < 4(n-3),\]
and we are done by Equation~\ref{Equation:Induction_second}.

If $\{v_2,v_4\}$ is not an edge of $G$, then $\N_0 \leq 7$. 
By Lemma~\ref{2(k-1)}, we have that the number of paths of length $3$ from $v_2$ to $v_5$ is at most $2(n-7)$, hence  $\N_2 \leq 2(n-7)$. 
Each vertex from $V(G)\setminus N[v]$ with exactly two neighbors in $N(v)$ contributes at most two to $\N_1$, since $h(i,j)\leq 2$ for every $0<i<j<5$, $\{i,j\}\neq \{1,3\},\{1,5\}$. 
Consider vertices from  $V(G)\setminus N[v]$ with at least  three neighbors from $N(v)$. There are at most two such vertices, and none of them is adjacent to $v_1$. 
Since $G$ is planar, they are adjacent to $v_2,v_3,v_5$ and $v_3,v_4,v_5$ or $v_2,v_4,v_5$ and $v_2,v_3,v_4$, respectively.
Therefore $\N_1 \leq h(2,3)+h(2,5)+h(3,5)+h(3,4)+h(3,5)+h(4,5)+2(n-8)=8+2(n-8).$
Finally we have \[\N(C_5,G) - \N(C_5,G') \leq 7 +8+2(n-8)+ 2(n-7) < 4(n-3),\]
since $n\geq 12$ and and we are done by Equation~\ref{Equation:Induction_second}.
\end{proof}

\section*{Acknowledgements}
We thank the referees for their many useful comments, which improved the presentation of the manuscript.  The research of the first, third and fifth authors is partially supported by the National Research, Development and Innovation Office -- NKFIH, grant K116769, K132696 and SNN117879.  
The research of the third author was supported by the Institute for Basic Science (IBS-R029-C4).
The research of the fourth author is supported by the Institute for Basic Science (IBS-R029-C1).


\begin{thebibliography}{1}

 \bibitem{ahmad}A. Alameddine. On the Number of Cycles of length~$4$ in a Maximal Planar Graph. Journal of Graph Theory 4(4), (1980): 417--422. 
 
 
 \bibitem{alon}N. Alon, Y. Caro. On the Number of Subgraphs of Prescribed  Type of Planar Graphs With a Given Number of Vertices. Annals of Discrete Mathematics 20, (1984): 25--36.

\bibitem{AlonShikhelman}
N. Alon and C. Shikhelman. Many $T$ copies in $H$-free graphs. Journal of Combinatorial Theory, Series B 121, (2016): 146--172.

\bibitem{shapira} A.~Cohen, A.~Shapira. Bounding the number of odd paths in planar graphs via convex optimization. arXiv preprint arXiv:2208.02097 (2022).

\bibitem{coxmartin} C.~Cox, and R.~Martin. Counting paths, cycles, and blow‐ups in planar graphs. Journal of Graph Theory, (2021).

\bibitem{coxmartin2} C.~Cox, and R.~Martin. The maximum number of $10$-and $12$-cycles in a planar graph. arXiv preprint arXiv:2106.02966, (2021).

 \bibitem{eppstein}
 D. Eppstein. Connectivity, graph minors, and subgraph multiplicity. Journal of Graph Theory 17.3, (1993): 409--416.

 \bibitem{erdos} P. Erd\H os. On the number of complete subgraphs contained in certain graphs. Magyar Tud. Akad. Mat. Kut. Int. K\"ozl., (1962): 459--474.
 
 \bibitem{fox}
 J. Fox and F. Wei. On the number of cliques in graphs with a forbidden minor. Journal of Combinatorial Theory, Series B 126, (2017): 175--197.

\bibitem{G2012} A. Grzesik. On the maximum number of five-cycles in a triangle-free graph. Journal of Combinatorial Theory, Series B 102(5), (2012): 1061--1066.

\bibitem{other} A.~Grzesik, E.~Gy\H{o}ri, A.~Paulos, N.~Salia, C.~Tompkins, O.~Zamora. The maximum number of paths of length three in a planar graph. Journal of Graph Theory, (2022).

\bibitem{genplanar} E. Gy\H{o}ri, A. Paulos, N. Salia, C. Tompkins, O. Zamora. Generalized Planar Tur\'an Numbers. The Electronic Journal of Combinatorics. 28(4), (2021).


\bibitem{hakimi}S. Hakimi, E. Schmeichel. On the Number of Cycles of length~$k$ in a Maximal Planar Graph. J. Graph Theory 3, (1979): 69--86. 
        
\bibitem{HHKNR2013} H. Hatami, J. Hladk\'y, D. Kr\' al, S. Norine, A. Razborov. On the number of pentagons in triangle-free graphs. Journal of Combinatorial Theory, Series A 120(3), (2013): 722--732.

\bibitem{surface}T.~Huynh, J.~Gwena\"el, D.~Wood. Subgraph densities in a surface. Combinatorics, Probability and Computing (2020): 1-28.

\bibitem{HW} T.~Huynh, D.~Wood. Tree densities in sparse graph classes. Canadian Journal of Mathematics (2021): 1-20.

\bibitem{kur}
K. Kuratowski. Sur le probl\'eme des courbes gauches en topologie. Fund. Math. (in French) 15 (1930): 271--283

\bibitem{k5}
C. Lee and S. Oum. Number of cliques in graphs with a forbidden subdivision. SIAM Journal on Discrete Mathematics 29.4, (2015): 1999--2005.

\bibitem{evencyc}
Z.~Lv, E.~Gy\H{o}ri, Z.~He, N.~Salia, C.~Tompkins, X.~Zhu. The maximum number of copies of an even cycle in a planar graph. arXiv preprint arXiv:2205.1581.


\bibitem{k4}
D. Wood. On the maximum number of cliques in a graph. Graphs and Combinatorics 23.3, (2007): 337--352.


\bibitem{kcliques}
D. Wood. Cliques in graphs excluding a complete graph minor. The Electronic Journal of Combinatorics 23(3), (2016).

\bibitem{Wormald}
N. Wormald. On the frequency of 3-connected subgraphs of planar graphs. Bulletin of the Australian Mathematical Society 34.2, (1986): 309--317.

\bibitem{zykov}
A. Zykov. On some properties of linear complexes. Mat. Sbornik N.~S.~24(66), (1949):  163--188.

\end{thebibliography}
\end{document}